\documentclass[11pt]{article}

\usepackage[T1]{fontenc}
\usepackage[utf8]{inputenc}
\usepackage{lmodern}
\usepackage{microtype}
\usepackage[margin=1in]{geometry}
\usepackage{amsmath,amssymb,amsthm,mathtools}
\usepackage{enumitem}
\usepackage[dvipsnames]{xcolor}
\usepackage{hyperref}
\usepackage{mathrsfs}
\usepackage[nameinlink,noabbrev]{cleveref}

\hypersetup{
  colorlinks=true,
  linkcolor=blue!50!black,
  citecolor=blue!50!black,
  urlcolor=blue!50!black,
  pdftitle={On the Codimension-1 PGL_4 Orbit Closures in Gr(2,10)},
  pdfauthor={Ari Krishna}
}

\newtheorem{theorem}{Theorem}[section]
\newtheorem{proposition}[theorem]{Proposition}
\newtheorem{lemma}[theorem]{Lemma}
\newtheorem{corollary}[theorem]{Corollary}
\newtheorem{definition}[theorem]{Definition}
\newtheorem{remark}[theorem]{Remark}

\crefname{theorem}{theorem}{theorems}
\Crefname{theorem}{Theorem}{Theorems}
\crefname{lemma}{lemma}{lemmas}
\Crefname{lemma}{Lemma}{Lemmas}
\crefname{proposition}{proposition}{propositions}
\Crefname{proposition}{Proposition}{Propositions}
\crefname{corollary}{corollary}{corollaries}
\Crefname{corollary}{Corollary}{Corollaries}
\crefname{definition}{definition}{definitions}
\Crefname{definition}{Definition}{Definitions}
\crefname{remark}{remark}{remarks}
\Crefname{remark}{Remark}{Remarks}

\newcommand{\G}{\mathrm{Gr}}
\newcommand{\Pj}{\mathbb P}

\newcommand{\Sym}{\mathrm{Sym}}

\newcommand{\Sing}{\mathrm{Sing}}
\newcommand{\PGL}{\mathrm{PGL}}
\newcommand{\GL}{\mathrm{GL}}

\title{On the Codimension-1 \texorpdfstring{$\PGL_4$}{PGL4} Orbit Closures in \texorpdfstring{$\G(2,10)$}{Gr(2,10)}}
\author{Ari Krishna}
\date{}

\begin{document}
\maketitle

\begin{abstract}
We study the natural action of $\PGL(V)$ on the Grassmannian
\[
G:=\G(2,\Sym^2 V^\vee), \qquad \dim V=4,
\]
whose points are pencils of quadrics in $\Pj(V)\cong \Pj^3$. Here $\dim G=16$ while $\dim \PGL(V)=15$, so the generic orbit has codimension one and one expects a one-parameter family of generic orbits. We construct this family via the $j$-invariant of the discriminant binary quartic of a pencil. We then determine the codimension-one orbit closures and compute their Chow classes.
The smooth codimension-one orbit closures are the reduced fibers of the $j$-map on the smooth
locus, while the unique boundary divisor is the closure of the orbit of a nodal quartic complete
intersection of arithmetic genus $1$ and geometric genus $0$. Every divisorial fiber of the rational $j$-map has class
$12\sigma_1$ in $A^1(G)$. For the reduced codimension-one orbit closures one has
$[\overline{O_a}]=12\sigma_1$ for $a\neq 0,1728,\infty$,
$[\overline{O_{1728}}]=6\sigma_1$, $[\overline{O_0}]=4\sigma_1$, and $[T]=12\sigma_1$.
\end{abstract}

\tableofcontents

\section{Introduction}

Let $V$ be a $4$-dimensional vector space over an algebraically closed field $k$ of characteristic $0$, and let
\[
G:=\G(2,\Sym^2 V^\vee).
\]
A point of $G$ is a $2$-dimensional linear system of quadrics in $\Pj(V)\cong \Pj^3$, i.e. a pencil of quadrics. The group $\PGL(V)$ acts naturally on $G$, and
\[
\dim G = 2(10-2)=16, \qquad \dim \PGL(V)=4^2-1=15.
\]
Consequently, the generic orbit has codimension one in $G$, so we expect a one-parameter family of generic orbits.

The purpose of this paper is to actualize that expectation, and to compute the Chow classes of the resulting codimension-one orbit closures. The following construction is central to our approach: a pencil of quadrics
\[
W=\langle Q_0,Q_1\rangle \in G
\]
has an associated discriminant binary quartic
\[
\Delta_W(s,t):=\det(sQ_0+tQ_1),
\]
whose roots record the singular members of the pencil. On the open locus where $\Delta_W$ has four distinct roots, this root divisor determines the $\PGL(V)$-orbit; equivalently, the generic orbit is classified by the corresponding point of the (coarse) moduli space of unordered quadruples on $\Pj^1$, i.e. by the $j$-invariant of the discriminant binary quartic.

Our computations are parallel to Choudhary's computations of the codimension-one orbit closures for the $\PGL_3$-action on $\G(3,V_2)$ in \cite{Choudhary}, since the same codimension-one-orbit mechanism underlies each. However, one geometric difference is that our special fiber has a unique divisorial component, whereas Choudhary's special fiber splits into two codimension-one orbit closures with multiplicities.

Our main result is the following.

\begin{theorem}\label{thm:main}
Let $G=\G(2,\Sym^2 V^\vee)$ with $\dim V=4$, and let $\PGL(V)$ act naturally on $G$. Then:
\begin{enumerate}[label=\textup{(\roman*)}]
    \item There is a rational $\PGL(V)$-invariant map
    \[
    p\colon G \dashrightarrow \Pj^1
    \]
    whose restriction to the open locus of smooth pencils is given by the $j$-invariant of the discriminant binary quartic.
    \item For each finite value $a\in \Pj^1\setminus\{\infty\}$ lying in the image of the smooth-locus map $p\colon U_{\mathrm{sm}}\to \Pj^1$, the reduced fiber
    \[
    O_a := \bigl(p^{-1}(a)\cap U_{\mathrm{sm}}\bigr)_{\mathrm{red}}
    \]
    is a single codimension-one $\PGL(V)$-orbit, and its closure $\overline{O_a}\subset G$
    is a codimension-one orbit closure.
    \item The special fiber $F_{\infty}$ has a unique divisorial component, namely the closure of the orbit of the pencil
    \[
    W_{\mathrm{node}}:=\big\langle x_0^2+x_1^2+x_2^2,\ x_0x_3+x_1^2+2x_2^2\big\rangle.
    \]
    The corresponding complete intersection in $\Pj^3$ is a quartic complete intersection of arithmetic genus $1$ with one ordinary node; in particular, its normalization has geometric genus $0$.
    \item Every divisorial fiber of the rational map $p$ has Chow class $12\sigma_1$ in
    $A^1(G)$. For the reduced codimension-one orbit closures one has
    \[
    [\overline{O_a}] = 12\sigma_1 \quad (a\notin\{0,1728,\infty\}),
    \qquad
    [\overline{O_{1728}}]=6\sigma_1,
    \qquad
    [\overline{O_0}]=4\sigma_1,
    \qquad
    [T]=12\sigma_1.
    \]
    Moreover,
    \[
    F_{1728}=2\,\overline{O_{1728}},
    \qquad
    F_0=3\,\overline{O_0}
    \]
    scheme-theoretically.
\end{enumerate}
\end{theorem}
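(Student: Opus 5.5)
The plan is to build everything from the discriminant map. Write $\Delta\colon G\dashrightarrow \Pj(\Sym^4 k^2)$, where $W=\langle Q_0,Q_1\rangle\mapsto \det(sQ_0+tQ_1)$. A change of basis of $W$ acts by $\GL_2$ on the binary quartic, so we compose with the $\SL_2$-invariants $I$ (degree 2) and $J$ (degree 3) of binary quartics. This gives
\[
p(W)=[\,I(\Delta_W)^3 : I(\Delta_W)^3-27J(\Delta_W)^2\,],
\]
normalized so that the first coordinate over the second is the $j$-invariant, with $j=\infty$ exactly on the discriminant of the quartic. This is a $\PGL(V)$-invariant rational map, which proves (i). For (ii) we use the classical normal form: if $\Delta_W$ has four distinct roots, the quadrics are simultaneously diagonalizable, $W=\langle \sum x_i^2,\sum \lambda_i x_i^2\rangle$ with distinct $\lambda_i$. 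Hence two smooth pencils with projectively equivalent root sets are $\PGL(V)$-equivalent. So each fiber of $p$ on $U_{\mathrm{sm}}$ is a single orbit, of dimension $15$, because the stabilizer of a smooth pencil is finite (it is an extension of $(\mathbb Z/2)^3$ by a finite subgroup of $\PGL_2$).

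For (iii), $F_\infty$ consists of pencils with a repeated discriminant root. We stratify it by Segre symbol. The symbol $[(11)11]$ gives a pencil with a rank-2 member; it has positive-dimensional stabilizer and orbit dimension at most 14. The symbol $[211]$ has a generic orbit of dimension 15, since its stabilizer is still finite, and the pencil $W_{\mathrm{node}}$ realizes it: its discriminant is $s^2(\cdots)$ with a double root and the corresponding Jordan block is nontrivial. We compute directly that the base curve has a single singular point at $[0:0:0:1]$ and that this singularity is an ordinary node, so $p_a=1$ and $g=0$. The other Segre symbols give strata of dimension at most 14. Then $F_\infty\cap U_{\mathrm{sm}}$ is empty, and the closure $T$ of the $[211]$ orbit is the unique divisorial component.

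For (iv), we use $A^1(G)=\mathbb Z\sigma_1$. The coefficients of $\Delta_W$ are Plücker-type forms of degree 4 in the pencil: $\det(sQ_0+tQ_1)$ is a $\GL_2$-semi-invariant quartic, and its coefficients transform by $\det^{2}$. Consequently $I(\Delta_W)$ is a section of $\mathcal O(4\cdot 2/2)$ — a careful weight count here gives $I\in H^0(\mathcal O_G(4))$ and $J\in H^0(\mathcal O_G(6))$. Thus $I^3$ and $27J^2$ are sections of $\mathcal O_G(12)$, and every fiber $F_a=\{I^3-\tfrac{a}{1728}(I^3-27J^2)=0\}$ has class $12\sigma_1$, provided the indeterminacy locus $\{I=J=0\}$ has codimension at least 2. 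It does, since it lies in the pencils whose discriminant has a triple root.

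We read off the multiplicities from the stabilizers and the local form of $j$. At $j=1728$ we have $J=0$, and on the smooth locus $J$ vanishes to order 1, but $F_{1728}=\{J^2=0\}$, so $F_{1728}=2\overline{O_{1728}}$. At $j=0$ we have $F_0=\{I^3=0\}=3\overline{O_0}$. This gives $6\sigma_1$ and $4\sigma_1$, provided $\{I=0\}$ and $\{J=0\}$ are reduced and irreducible. Irreducibility follows from (ii), since these loci meet $U_{\mathrm{sm}}$ in a single orbit and have no divisorial component in $F_\infty$. Reducedness can be checked at a single point by computing the differential of $I$ (respectively $J$) along a transverse curve, for instance varying $\lambda_i$ in the diagonal family. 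For generic $a$ the fiber is reduced by generic smoothness, because $j$ has nonzero derivative away from $0$ and $1728$ on the $\lambda$-family, so its class is $12\sigma_1$. For $T$, the divisor $\{I^3-27J^2=0\}$ must equal $T$ with multiplicity 1. We check this by showing that the discriminant of the discriminant vanishes to first order along a transverse deformation of $W_{\mathrm{node}}$.

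The main obstacle is this degree and weight bookkeeping, that is, pinning down the exact degree of $I$ and $J$ as sections on $G$, together with the reducedness of $T$. To secure the degree, we would first intersect with a test curve, namely a Schubert line $\{\langle Q_0,Q_1+uQ_2\rangle\}$ with $u\in\Pj^1$, on which $\Delta$ has degree 4 in $u$. This gives $\deg I=8$ and $\deg J=12$ as polynomials in $u$. However, the cube and square relation shows that the class of $F_a$ is $\deg(I^3)$ along this line. The line needs adjusting, since $\sigma_1\cdot\text{line}=1$ and $\det$ is linear in each quadric only after accounting for $\Sym$; this check is what fixes the value 12.
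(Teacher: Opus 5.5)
Your argument is correct and takes a genuinely different route from the paper in (iii) and (iv).

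\textbf{How the paper does it.} The paper never regards $I$ and $J$ as sections on $G$. It restricts $j$ to the Schubert curve $B$ of lines through a general point $q$ of a general plane $\Pi\subset\Pj^9$. Riemann--Hurwitz for the projection from $q$ of the smooth plane quartic $C=\Pi\cap D$ gives $12$ simple tangent lines. A transversality argument ensures $I\neq 0$ at these lines, so $j|_B$ has exactly $12$ simple poles, and hence $[F_a]=12\sigma_1$. The CM classes then come from dividing $12$ by the Legendre ramification indices $3$ and $2$. Showing $[F_\infty]=12\sigma_1$, and hence that $T$ has multiplicity one, needs a further resolution-of-indeterminacy argument.

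\textbf{What your route buys.} You observe that $I(\Delta_W)$ and $J(\Delta_W)$ are $\mathrm{SL}_2$-invariants of bidegree $(4,4)$ and $(6,6)$ in $(Q_0,Q_1)$, hence sections of $\mathcal O_G(4)$ and $\mathcal O_G(6)$. This makes every fiber, including $F_\infty=\operatorname{div}(I^3-27J^2)$, the divisor of a section of $\mathcal O_G(12)$. It also gives $[\overline{O_0}]=[\operatorname{div}I]=4\sigma_1$ and $[\overline{O_{1728}}]=[\operatorname{div}J]=6\sigma_1$ directly. For this you need reducedness, and you need that $I$ and $J$ do not vanish identically on $T$; the latter holds because the discriminant of $W_{\mathrm{node}}$ has no triple root. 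Your route removes all genericity and transversality input. In return, the paper's slice computation explains $12$ geometrically as the number of tangents from a point to a plane quartic.

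\textbf{Part (iii).} You replace the paper's incidence-correspondence counts with the Segre-symbol stratification, which is fine. Two points are only asserted, though. First, you do not prove that every symbol other than $[1111]$ and $[211]$ gives a stratum of dimension $\le 14$. Second, pencils with $\Delta_W\equiv 0$ have no Segre symbol at all. The paper's two counts cover both: lines meeting $\Sing(D)$, and tangent lines at smooth points of $D$ with contact type other than $2+1+1$. The first count also handles degenerate pencils, because a pencil of singular quadrics in $\Pj^3$ always contains a member of rank $\le 2$.

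\textbf{An error in your last paragraph.} Your test-curve computation is wrong and should be dropped rather than ``adjusted''. Along the Schubert curve $u\mapsto\langle Q_0,Q_1+uQ_2\rangle$, which is exactly the paper's $B$, the coefficient $a_i$ of $s^{4-i}t^i$ in $\det\bigl(sQ_0+t(Q_1+uQ_2)\bigr)$ has degree at most $i$ in $u$. Every monomial of $I$ has index sum $4$ and every monomial of $J$ has index sum $6$. So $I$ and $J$ have degree $4$ and $6$ in $u$, not $8$ and $12$. The numbers $8$ and $12$ are their degrees in the entries of the $10\times 2$ matrix $(Q_0,Q_1)$. That is consistent with $\mathcal O_G(4)$ and $\mathcal O_G(6)$, because Pl\"ucker coordinates are quadratic in those entries. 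The test curve therefore confirms your weight count with no correction, and $\deg_u(I^3-27J^2)=12$ reproduces the paper's $12$ tangent lines. Finally, the normalization should read $p=[1728\,I^3:I^3-27J^2]$.
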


The proof proceeds in three steps.
\begin{enumerate}[label=\arabic*.]
    \item We classify the generic smooth orbits by simultaneous diagonalization and the discriminant divisor.
    \item We compute the class of a general fiber by intersecting with a complementary Schubert cycle and reducing to a degree computation for the $j$-map attached to lines through a point in a plane quartic.
    \item We analyze the special fiber $j=\infty$ by identifying it with the repeated-root locus for discriminant quartics and proving that its unique divisorial component is the generic tangency locus for the determinant quartic hypersurface.
\end{enumerate}

\subsection*{Acknowledgements}
This paper was inspired by Choudhary's paper \cite{Choudhary}, which continued a program initiated by Goel in \cite{Goel}. I am thankful to the two of them for their time and advice. I am deeply grateful to Prof. Joe Harris for his continued mentorship and generosity; I owe essentially everything I know about algebraic geometry to him. 

\section{The discriminant quartic and the \texorpdfstring{$j$}{j}-map}
Our first task is to replace the pencil by its inscription on $\mathbb{P}^1$.

Let
\[
\Pj^9 := \Pj(\Sym^2 V^\vee)
\]
be the projective space of quadrics in $\Pj(V)\cong \Pj^3$, and let
\[
D:=\{[Q]\in \Pj^9 : \det(Q)=0\}
\]
be the determinant quartic hypersurface of singular quadrics.

A point $W\in G$ determines a line
\[
\ell_W:=\Pj(W)\subseteq \Pj^9.
\]
If $W=\langle Q_0,Q_1\rangle$, then the scheme-theoretic intersection $\ell_W\cap D$ is cut out by the binary quartic
\[
\Delta_W(s,t):=\det(sQ_0+tQ_1).
\]
By construction, its zero-locus divisor on $\Pj^1$ records the singular members of the pencil.

%return to fix wording below! 

\begin{definition}
Let $U_{\mathrm{sm}}\subseteq G$ be the open locus of pencils for which $\Delta_W$ has four distinct roots. Equivalently, $U_{\mathrm{sm}}$ is the locus of pencils whose complete intersection in $\Pj^3$ is smooth.
\end{definition}

The equivalence in the definition is classical. If $X_W\subseteq \Pj^3$ is singular, then some point of $X_W$ is a common zero of $Q_0,Q_1$ and of all their first derivatives, so some singular quadric in the pencil has its vertex on the base locus; equivalently, the discriminant acquires a repeated root. 

Conversely, if $\Delta_W$ has a repeated root, then the pencil contains a rank-$3$ quadric $Q$ for which the tangent direction in the pencil annihilates the vertex of $Q$; equivalently, the vertex lies on the other generator of the pencil and hence on the base locus. The Jacobian criterion then shows that $X_W$ is singular at that vertex. Equivalently, the smooth complete intersections of two quadrics are exactly the pencils with Segre symbol $[1,1,1,1]$; see the discussion of pencils of quadrics and Segre symbols in \cite{Dolgachev}.

We classify the $\PGL(V)$-orbits on $U_{\mathrm{sm}}$, which amounts to some linear algebra. 

\begin{proposition}[Simultaneous diagonalization]\label{prop:diag}
Let $W\in U_{\mathrm{sm}}$. Then $W$ is $\PGL(V)$-equivalent to a pencil of the form
\[
W_\lambda=
\left \langle
x_0^2+x_1^2+x_2^2+x_3^2,\
\lambda_0x_0^2+\lambda_1x_1^2+\lambda_2x_2^2+\lambda_3x_3^2
\right\rangle
\]
with $\lambda_0,\lambda_1,\lambda_2,\lambda_3$ pairwise distinct.
\end{proposition}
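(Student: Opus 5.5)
The plan is to use linear algebra of symmetric bilinear forms, working throughout in characteristic $0$ over an algebraically closed field. First I will show that some member of the pencil is nonsingular. Since $\Delta_W$ has four distinct roots, it is not identically zero, so some member $Q_0\in W$ has $\det Q_0\neq 0$. After rescaling the coordinates $(s,t)$ of the pencil, I may choose the basis $W=\langle Q_0,Q_1\rangle$ with $Q_0$ nondegenerate. Because $k$ is algebraically closed of characteristic $\neq 2$, there is a change of coordinates on $V$ carrying $Q_0$ to $x_0^2+x_1^2+x_2^2+x_3^2$.

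Next I will regard $Q_0$ and $Q_1$ as symmetric bilinear forms $B_0,B_1$ on $V$ and form the endomorphism $A=B_0^{-1}B_1$ of $V$. This operator is self-adjoint with respect to $B_0$. Its characteristic polynomial is, up to a nonzero constant, $\det(tB_0-B_1)=\Delta_W(t,-1)$ after dehomogenizing. The chosen $Q_0$ is nonsingular, so $[1:0]$ is not a root, and hence the four distinct roots of $\Delta_W$ give four distinct eigenvalues $\lambda_0,\dots,\lambda_3$ of $A$. An operator with distinct eigenvalues is diagonalizable, so $V=\bigoplus_i L_i$ with eigenlines $L_i$.

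For eigenvectors $v_i\in L_i$, self-adjointness gives
\[
\lambda_i B_0(v_i,v_j)=B_0(Av_i,v_j)=B_0(v_i,Av_j)=\lambda_j B_0(v_i,v_j).
\]
So $B_0(v_i,v_j)=0$ for $i\neq j$, and also $B_1(v_i,v_j)=B_0(Av_i,v_j)=0$. Since $B_0$ is nondegenerate and the $L_i$ are pairwise orthogonal, each $B_0(v_i,v_i)\neq 0$. I can therefore rescale each $v_i$, taking a square root in $k$, so that $B_0(v_i,v_i)=1$; then $B_1(v_i,v_i)=\lambda_i$. In the dual coordinates $x_i$ to the basis $v_i$, this gives $Q_0=\sum x_i^2$ and $Q_1=\sum\lambda_i x_i^2$, with the $\lambda_i$ pairwise distinct. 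Passing to $\PGL(V)$ is harmless, since scalars act trivially on $G$.

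The main obstacle is keeping track of the correspondence between the roots of $\Delta_W$ and the eigenvalues, including the root at infinity. I will handle this by the initial choice of a nonsingular $Q_0$, which moves all four roots into the affine chart. Distinctness of the roots then gives distinctness of the eigenvalues, and this is exactly what forces diagonalizability. Without it a nontrivial Jordan block could occur, corresponding to a Segre symbol other than $[1,1,1,1]$.
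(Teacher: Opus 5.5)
Your proposal is correct and follows the paper's proof. Like the paper, you pick a nondegenerate member $Q_0$ and use the $B_0$-self-adjoint operator $B_0^{-1}B_1$, whose four distinct eigenvalues come from the four distinct roots of $\Delta_W$. You then rescale a $B_0$-orthogonal, nonisotropic eigenbasis to be orthonormal. You are slightly more explicit than the paper in two places: you rule out the root $[1:0]$, and you check that $B_1$ is diagonal in that basis.
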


\begin{proof}
Choose a smooth quadric $Q_0\in W$ (since we are in $U_{\mathrm{sm}}$, such a member exists). Let $B_0,B_1$ be the symmetric matrices of $Q_0,Q_1$ in some chosen basis of $V$. Because $Q_0$ is smooth, $B_0$ is invertible. Define the linear map
\[
T:=B_0^{-1}B_1\in \operatorname{End}(V).
\]
Then, $T$ is $B_0$-self-adjoint, because
\[
B_0T=B_1=B_1^\top=T^\top B_0.
\]
Moreover,
\[
\Delta_W(s,t)=\det(sB_0+tB_1)=\det(B_0)\det(sI+tT).
\]
Thus, the roots of $\Delta_W$ are the negatives of the eigenvalues of $T$. Since $\Delta_W$ has four distinct roots, $T$ has four distinct eigenvalues, so it is diagonalizable. Because $T$ is self-adjoint with respect to the nondegenerate symmetric form $B_0$, its eigenspaces for distinct eigenvalues are pairwise $B_0$-orthogonal. Each eigenline is nonisotropic, which is to say that if $B_0(v_i,v_i)=0$ for some eigenvector $v_i$, then $v_i$ would be orthogonal to every eigenspace and would hence lie in the radical of $B_0$, which is impossible by nondegeneracy. Now, choosing a $B_0$-orthogonal eigenbasis $\{v_0,\dots,v_3\}$ and rescaling each vector so that $B_0(v_i,v_i)=1$, we obtain coordinates in which
\[
Q_0=x_0^2+x_1^2+x_2^2+x_3^2,
\qquad
Q_1=\lambda_0x_0^2+\lambda_1x_1^2+\lambda_2x_2^2+\lambda_3x_3^2.
\]
This is the desired form.
\end{proof}

\begin{proposition}[Orbit classification on the smooth locus]\label{prop:orbit-classification}
Two pencils in $U_{\mathrm{sm}}$ lie in the same $\PGL(V)$-orbit if and only if their discriminant divisors on $\Pj^1$ are projectively equivalent. Equivalently, the generic orbit is determined by the $j$-invariant of the discriminant binary quartic.
\end{proposition}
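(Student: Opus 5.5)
We prove the two directions separately, using \Cref{prop:diag} to reduce everything to diagonal pencils. Throughout, the discriminant divisor of $W$ means the zero divisor $\operatorname{div}(\Delta_W)\subset \Pj(W)\cong\Pj^1$. Comparing divisors of different pencils requires identifying the lines $\Pj(W)$, so "projectively equivalent" is understood up to an isomorphism $\Pj(W)\to\Pj(W')$.

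\emph{Invariance.} Let $g\in\GL(V)$ with $gW=W'$. Then $g$ induces a linear isomorphism $W\to W'$, $Q\mapsto Q\circ g^{-1}$. Moreover $\det(Q\circ g^{-1})=\det(g)^{-2}\det(Q)$, so $\Delta_{W'}$ pulled back along the induced map $\Pj(W)\to\Pj(W')$ is a nonzero scalar multiple of $\Delta_W$. Hence the discriminant divisors correspond under a projective isomorphism, and the class of the divisor, in particular its $j$-invariant, depends only on the orbit.

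\emph{Converse.} Let $W,W'\in U_{\mathrm{sm}}$ have projectively equivalent discriminant divisors. By \Cref{prop:diag}, we may assume $W=W_\lambda$ and $W'=W_\mu$. In the basis $(Q_0,Q_1)$ we have $\Delta_{W_\lambda}(s,t)=\prod_i(s+\lambda_i t)$, with roots $[s:t]=[-\lambda_i:1]$. The key observation is that any change of basis of the pencil, $Q_0'=aQ_0+bQ_1$ and $Q_1'=cQ_0+dQ_1$ with $\begin{pmatrix}a&b\\c&d\end{pmatrix}\in\GL_2$, acts on the roots by the corresponding Möbius transformation. Since the $\lambda_i$ are distinct, at most one satisfies $a+b\lambda_i=0$. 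After applying a preliminary Möbius transformation we may assume the new $Q_0'$ is smooth, i.e.\ $a+b\lambda_i\ne0$ for all $i$. Then
\[
Q_0'=\sum_i (a+b\lambda_i)x_i^2,\qquad Q_1'=\sum_i (c+d\lambda_i)x_i^2 .
\]
Rescaling $x_i\mapsto x_i/\sqrt{a+b\lambda_i}$, which uses that $k$ is algebraically closed, puts $\langle Q_0',Q_1'\rangle=W_\lambda$ in the form $W_{\lambda'}$ with
\[
\lambda'_i=\frac{c+d\lambda_i}{a+b\lambda_i}.
\]
So every Möbius image of $\{\lambda_i\}$ is realized by the same pencil in a new diagonal form. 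The hypothesis now lets us choose the basis so that $\{\lambda'_i\}=\{\mu_i\}$ as sets. A permutation of coordinates then matches the indices, giving $W_\lambda$ and $W_\mu$ in the same orbit. The step needing the most care is this bookkeeping: keeping the new first generator smooth, and checking that the root set transforms by exactly the Möbius map, including the point at infinity.

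\emph{$j$-invariant.} Finally, four distinct points of $\Pj^1$ are determined up to $\PGL_2$ by their $j$-invariant. This is classical: the cross-ratio is defined up to the $S_3$-action, and $j$ is the $S_3$-invariant function separating the orbits. This gives the equivalent formulation.
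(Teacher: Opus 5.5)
Your proposal is correct. Its skeleton matches the paper's: invariance from $\det(gBg^{\top})=(\det g)^2\det B$, then reduction to diagonal pencils via Proposition~\ref{prop:diag}.

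The difference is at the converse, which is the real content of the statement. The paper does not argue it at all; it cites the classical classification of pencils with Segre symbol $[1,1,1,1]$ (Dolgachev \S8.2, Reid). You prove it directly. Reparametrizing the pencil by $\bigl(\begin{smallmatrix}a&b\\ c&d\end{smallmatrix}\bigr)$ and rescaling $x_i$ by $\sqrt{a+b\lambda_i}$ rewrites $W_\lambda$ as $W_{\lambda'}$ with $\lambda'_i=(c+d\lambda_i)/(a+b\lambda_i)$. This shows every $\PGL_2$-motion of the root set is realized by an element of $\PGL(V)$, and a coordinate permutation then absorbs the relabeling of roots.

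The citation buys brevity. Your argument gives a self-contained, elementary proof. It shows exactly where algebraic closure is used (the square roots), and it makes explicit the identification of $\Pj(W)$ with $\Pj(W')$ that the paper's phrase ``projectively equivalent'' leaves implicit. It is also the same reparametrization trick the paper itself uses later in Proposition~\ref{prop:unique-generic-boundary-orbit}.

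One simplification: the ``preliminary M\"obius transformation'' is unnecessary. The transformation you actually need sends $\{\lambda_i\}$ onto the finite set $\{\mu_i\}$, so $a+b\lambda_i\neq 0$ holds for all $i$ automatically. The new first generator is therefore smooth for free, and the ``point at infinity'' bookkeeping you flagged never arises.
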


\begin{proof}
If $g\in \PGL(V)$ carries $W$ to $W'$, then after choosing representatives in $\GL(V)$, we has
\[
\Delta_{W'}(s,t)=c\,\Delta_W(s,t)
\]
for some nonzero scalar $c$, so the two discriminant divisors are projectively equivalent.

Conversely, Proposition~\ref{prop:diag} permits us to assume that both pencils are diagonal. The projective classification of smooth pencils of quadrics in $\Pj^3$ asserts that a pencil with Segre symbol $[1,1,1,1]$ is determined up to $\PGL(V)$ by the projective equivalence class of its discriminant divisor on the parameter line (see \cite[\S8.2]{Dolgachev} and Reid's discussion of complete intersections of two quadrics \cite{Reid}). Equivalently, two smooth diagonal pencils are $\PGL(V)$-equivalent if and only if their unordered four-point discriminant divisors are $\PGL_2$-equivalent. This proves the claim.
\end{proof}

\begin{definition}
Let
\[
p\colon U_{\mathrm{sm}}\to \Pj^1
\]
be the regular map sending $W$ to the $j$-invariant of the discriminant binary quartic $\Delta_W$. We continue to denote by
\[
p\colon G\dashrightarrow \Pj^1
\]
the induced rational map.
\end{definition}

This map $p$ is dominant. To this end, for $\lambda\in k\setminus\{0,1\}$, consider the squarefree binary quartic
\[
f_\lambda(s,t)=st(s-t)(s-\lambda t).
\]
The double cover $y^2=f_\lambda(s,t)$ is the elliptic curve $y^2=x(x-1)(x-\lambda)$ after dehomogenizing by setting $t=1$; thus, the binary-quartic $j$-invariant agrees with the elliptic-curve $j$-invariant, and is therefore given by
\[
j(\lambda)=2^8 \,\frac{(1-\lambda+\lambda^2)^3}{\lambda^2(1-\lambda)^2};
\]
see \cite[Chapter~III]{Silverman}. This is a nonconstant rational function on $\Pj^1$, hence surjective because $k$ is algebraically closed. 

The identification with the elliptic-curve $j$-invariant comes from the double cover of $\Pj^1$ branched at the four roots of the binary quartic. Moreover, every squarefree binary quartic occurs as the discriminant of a smooth diagonal pencil: indeed, if
\[
f(s,t)=\prod_{i=0}^3 (s-\lambda_i t)
\]
has four distinct roots, then the diagonal pencil
\[
\left\langle \sum_{i=0}^3 x_i^2,\ \sum_{i=0}^3 \lambda_i x_i^2\right\rangle
\]
has discriminant proportional to $f$. It follows that $p$ is surjective on the smooth locus and is therefore dominant as a rational map on $G$.

\begin{corollary}\label{cor:fibers-are-orbits}
For every $a\in \Pj^1\setminus\{\infty\}$ corresponding to a smooth discriminant quartic,
the set-theoretic fiber $p^{-1}(a)\cap U_{\mathrm{sm}}$ is a single $\PGL(V)$-orbit $O_a$.
Its closure $\overline{O_a}\subset G$ is an irreducible codimension-one orbit closure.
Moreover, no extra divisorial component enters when taking the closure in $G$.
The scheme-theoretic multiplicity of $F_a$ along $\overline{O_a}$ is determined in Section~6.
\end{corollary}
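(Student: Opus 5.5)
The plan is to derive the corollary from Proposition~\ref{prop:orbit-classification} together with a dimension count.

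\textbf{Single orbit.} Fix a finite value $a$ in the image of $p|_{U_{\mathrm{sm}}}$. Two squarefree binary quartics have the same $j$-invariant exactly when their root divisors are $\PGL_2$-equivalent; this is the classical fact that $j$ separates unordered four-point configurations on $\Pj^1$ up to projectivity. So all points $W\in p^{-1}(a)\cap U_{\mathrm{sm}}$ have projectively equivalent discriminant divisors, and Proposition~\ref{prop:orbit-classification} says they form a single $\PGL(V)$-orbit $O_a$.

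\textbf{Codimension one.} The fiber of the dominant map $p\colon U_{\mathrm{sm}}\to\Pj^1$ over $a$ is the zero locus of one nonconstant function on the irreducible $16$-dimensional variety $U_{\mathrm{sm}}$. It is nonempty, so every component has dimension at least $15$. It is also a proper closed subset, so its dimension is at most $15$. Hence $\dim O_a=15$. Equivalently, the stabilizer of a smooth pencil is finite. One can check this directly on the diagonal form $W_\lambda$ of Proposition~\ref{prop:diag}: an element preserving the pencil permutes the four singular members, hence permutes the four vertices $[e_i]$. It is therefore a monomial matrix, and there are only finitely many such matrices up to scalars.

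\textbf{Closure.} The orbit closure $\overline{O_a}$ is irreducible because $O_a$ is the image of the irreducible group $\PGL(V)$. It has dimension $15$, so it is a codimension-one orbit closure in $G$.

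\textbf{No extra divisorial component.} Here the claim should be read as follows. Let $F_a$ be the closure in $G$ of the fiber over $a$, i.e.\ the closure of the graph fiber, or the divisor of the pulled-back function $j-a$ restricted away from the indeterminacy locus. The claim is that $F_a$ has no divisorial component other than $\overline{O_a}$. Any other component would lie in the complement $G\setminus U_{\mathrm{sm}}=\overline{\{\Delta_W \text{ has a repeated root}\}}$. This complement is a single irreducible divisor: it is the preimage of the discriminant divisor of binary quartics, equivalently the locus of lines tangent to $D$. Along a general point of it, the discriminant quartic has exactly one double root and two simple roots, and $j\to\infty$ there. So $j-a$ is not identically zero on it, and the boundary divisor is not a component of $F_a$ for finite $a$. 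All remaining points of the boundary lie in loci of codimension at least $2$, so no further divisor can appear.

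\textbf{Main obstacle.} The step that needs care is this last one: showing that the boundary divisor is irreducible, and that $j$ has a pole along it rather than being indeterminate. I would do this via the incidence correspondence $\{(W,q): q\in\ell_W\cap D \text{ with multiplicity}\ge 2\}$. This correspondence fibers over the smooth locus of $D$ (rank-$3$ quadrics) with irreducible fibers consisting of lines tangent at $q$. The multiplicity of $F_a$ along $\overline{O_a}$ is left to Section~6, as stated.
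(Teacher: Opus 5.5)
Your proof is correct. For the single-orbit, irreducibility, and ``no extra divisor'' steps it follows the paper's argument: Proposition~\ref{prop:orbit-classification}, together with the fact that the only divisorial part of $G\setminus U_{\mathrm{sm}}$ is $T$, which lies over $\infty$ (Theorem~\ref{thm:special-fiber-structure}).

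\textbf{Codimension one.} Here your route genuinely differs. The paper gets $\dim O_a=15$ from Proposition~\ref{prop:finite-stabilizer-smooth}, i.e.\ from finite stabilizers. You instead apply a Krull/fiber-dimension argument to the nonconstant regular function $j-a$ on the irreducible $16$-dimensional $U_{\mathrm{sm}}$. Since the nonempty proper fiber is a single orbit, it has dimension $15$, and finiteness of stabilizers comes out as a consequence rather than an input. Your route needs no group computation at all. The paper's route gives the stabilizer statement directly for every smooth pencil, independently of $p$.

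\textbf{Your aside on stabilizers.} It is wrong as written: monomial matrices modulo scalars form a positive-dimensional group, since they contain the diagonal torus. You must also use that $g=\operatorname{diag}(d_i)$ sends \emph{both} $\sum x_i^2$ and $\sum\lambda_i x_i^2$ into the pencil. This gives $d_i^2=\alpha+\beta\lambda_i$ and $d_i^2\lambda_i=\gamma+\delta\lambda_i$. Hence $\beta\lambda_i^2+(\alpha-\delta)\lambda_i-\gamma=0$ for four distinct $\lambda_i$, which forces all $d_i^2$ to be equal. Your main argument does not depend on this aside, so nothing breaks.

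\textbf{No extra divisorial component.} In this step you make explicit something the paper leaves implicit: $j$ has a genuine pole along $T$. A $2+1+1$ quartic has $I\neq 0$, because $I=\Delta=0$ forces $J=0$, which means a root of multiplicity at least $3$. Hence $T\not\subset F_a$.

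Your stronger claim, that all of $G\setminus U_{\mathrm{sm}}$ is one irreducible divisor, is also true. This set is the zero locus of the discriminant of $\Delta_W$, so it is pure of codimension one, and the codimension-$\ge 2$ pieces in Theorem~\ref{thm:special-fiber-structure} must lie inside $T$. However, calling it the ``preimage of the discriminant divisor'' does not by itself give irreducibility. You still need the incidence correspondence together with Lemmas~\ref{lem:lines-meeting-singD} and~\ref{lem:bad-tangent-directions}, as you indicate.
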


\begin{proof}
By Proposition~\ref{prop:orbit-classification}, the set-theoretic fiber of $p$ on $U_{\mathrm{sm}}$ is a single
$\PGL(V)$-orbit, hence it is irreducible. By Proposition~\ref{prop:finite-stabilizer-smooth}, every smooth orbit has dimension $15$, so its closure
has codimension one in $G$.
As we will prove in Theorem~\ref{thm:special-fiber-structure}, the only divisorial boundary in
$G\setminus U_{\mathrm{sm}}$ lies over $\infty$, so no extra divisorial component enters for
finite $a$.
\end{proof}

\section{Intersection theory and the general fiber class}

In this section, we compute the class of a smooth fiber $F_a$ in $A^1(G)$. We begin by recalling the relevant intersection-theoretic machinery for Grassmannians.

\subsection{Schubert calculus on the Grassmannian}\label{subsec:schubert}

We briefly recall the intersection theory of Grassmannians, following \cite[Chapters~4--6]{EH}; the reader is referred there for proofs and further details.

Let $\G(k,n)$ denote the Grassmannian of $k$-dimensional linear subspaces of an $n$-dimensional vector space, or equivalently the space of $(k-1)$-dimensional projective linear subspaces of $\Pj^{n-1}$. In our setting, $G=\G(2,\Sym^2 V^\vee)\cong \G(2,10)$, so points of $G$ are $2$-planes in a $10$-dimensional vector space; equivalently, they are lines in $\Pj^9$.

The Chow ring $A^*(G)$ is generated as an abelian group by the \emph{Schubert classes} $\sigma_a$ and more generally $\sigma_{a_1,\ldots,a_s}$, which are defined as follows. Fix a complete flag
\[
0 = V_0 \subset V_1 \subset V_2 \subset \cdots \subset V_n = \Sym^2 V^\vee.
\]
Given a partition $\lambda=(a_1\geq a_2\geq \cdots \geq a_k\geq 0)$ with $a_1\leq n-k$, its \emph{Schubert cycle} is
\[
\Sigma_\lambda := \{ W \in \G(k,n) : \dim(W \cap V_{n-k+i-a_i}) \geq i, \quad 1\leq i\leq k \},
\]
and the corresponding \emph{Schubert class} $\sigma_\lambda=[\Sigma_\lambda]\in A^{|\lambda|}(G)$ depends only on the partition $\lambda$, not on the choice of flag (\cite[Theorem~4.2]{EH}). Here, $|\lambda|=\sum a_i$ is the codimension of $\Sigma_\lambda$ in $G$.

For the Grassmannian $\G(k,n)$, the Chow ring is
\[
A^*(\G(k,n)) \cong \mathbb{Z}[\sigma_1,\ldots,\sigma_{n-k}] / I,
\]
where $I$ is the ideal generated by relations coming from the Whitney sum formula for Chern classes of the tautological sequence (\cite[Chapter~5]{EH}). The group $A^1(\G(k,n))$ is freely generated by the \emph{special Schubert class} $\sigma_1$, which is the class of the hyperplane section under the Pl\"ucker embedding. Equivalently, $\operatorname{Pic}(G)\cong \mathbb{Z} \langle \sigma_1 \rangle$ (\cite[Proposition~4.3]{EH}).

In our case, $G=\G(2,10)$ has dimension $2(10-2)=16$, and the relevant intersection pairing for divisor-class computations is
\[
A^1(G) \times A^{15}(G) \to A^{16}(G) \cong \mathbb{Z}.
\]
The top-dimensional Schubert class in $A^{15}(G)$ is $\sigma_{8,7}$, which is the class of a point-pencil of lines in a fixed plane of $\Pj^9$. Then, the crucial intersection number is
\[
\sigma_1 \cdot \sigma_{8,7} = 1,
\]
which follows from the Pieri formula (\cite[Theorem~4.7]{EH}): $\sigma_1\cdot \sigma_{a,b}=\sum \sigma_{a',b'}$, where the sum is over partitions obtained by adding one box to $(a,b)$ while remaining a valid partition for $\G(2,10)$. In the extreme case $\sigma_{8,7}$, the only valid partition with $|(a',b')|=16=\dim G$ is $(8,8)$, and $\sigma_{8,8}$ is the class of a point; hence $\sigma_1\cdot\sigma_{8,7}=\sigma_{8,8}$, which has degree $1$. As a consequence, if $[D]=m\sigma_1$ is the class of a divisor, then $m$ can be recovered by intersecting with the complementary cycle $\sigma_{8,7}$, i.e.
\[
m = [D] \cdot \sigma_{8,7},
\]
which we may try to compute directly via enumerative techniques. This is the strategy we will use to compute the class of a fiber of the $j$-map.

\subsection{The complementary Schubert cycle}

We use the identification
\[
\G(2,\Sym^2V^\vee)\cong \G(1,\Pj(\Sym^2V^\vee))=\G(1,\Pj^9),
\]
which sends a $2$-plane in $\Sym^2V^\vee$ to its associated line in $\Pj^9$.

Fix a general plane
\[
\Pi\cong \Pj^2\subseteq \Pj^9
\]
and a general point $q\in \Pi$. Let $B\cong \Pj^1$ be the family of lines in $\Pi$ passing through $q$. Viewed inside $G=\G(1,\Pj^9)$, this is the Schubert cycle $\sigma_{8,7}$ consisting of lines contained in the fixed plane $\Pi$ and passing through the fixed point $q$. As discussed in Section~\ref{subsec:schubert},
\[
\sigma_1\cdot \sigma_{8,7}=1.
\]
Let
\[
C:=\Pi\cap D,
\]
which is a quartic plane curve. For general $\Pi$, we have that $C$ is smooth, and we choose $q\in \Pi\setminus C$. Every line $\ell\in B$ meets $C$ in a degree-$4$ divisor, which is precisely the discriminant divisor of the corresponding pencil of quadrics.

\subsection{The induced \texorpdfstring{$j$}{j}-map on the Schubert slice}

To compute the divisor class, we pass to the microcosm of a single Schubert slice. There, the determinant hypersurface becomes a plane quartic, and the orbit problem reduces to the geometry of tangent lines through a point. 

First, projection from $q$ furnishes a finite morphism
\[
\pi_q\colon C\to B\cong \Pj^1
\]
of degree $4$. Note that the branch points of $\pi_q$ correspond exactly to the lines through $q$ tangent to $C$, which we enumerate below. 

\begin{lemma}\label{lem:12-tangents}
There are exactly $12$ simple tangent lines to $C$ through $q$.
\end{lemma}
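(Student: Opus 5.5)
I would prove \Cref{lem:12-tangents} by applying Riemann--Hurwitz to the projection $\pi_q\colon C\to B\cong\Pj^1$ and then using generality of $\Pi$ and $q$ to show that every branch point is simple.

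\textbf{Step 1: smoothness of $C$.} Since $D$ is an irreducible quartic hypersurface, Bertini's theorem applied to a general plane $\Pi\subseteq\Pj^9$ shows that $C=\Pi\cap D$ is a smooth plane quartic. The singular locus of $D$ is the locus of quadrics of rank $\le 2$, which has codimension $3$ in $\Pj^9$, so a general plane misses it. The genus-degree formula then gives $g(C)=3$.

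\textbf{Step 2: Riemann--Hurwitz.} Because $q\notin C$, projection from $q$ is a finite morphism $C\to\Pj^1$ of degree $4$. In characteristic $0$ it is separable, and Riemann--Hurwitz gives
\[
2g(C)-2=4(-2)+\deg R,
\]
so $\deg R=4+8=12$. Equivalently, the class of $C$ is $d(d-1)=12$ for $d=4$, which counts tangent lines through a general point.

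\textbf{Step 3: all ramification is simple.} Ramification at $x\in C$ means the line $\overline{qx}$ is tangent to $C$ at $x$, with ramification index $e_x=\operatorname{mult}_x(\overline{qx}\cdot C)$. To conclude that there are exactly $12$ distinct tangent lines, each simply tangent at a single point, I must rule out two things for general $q$:
\begin{enumerate}[label=(\alph*)]
\item $q$ lies on a flex tangent or a hyperflex tangent, which would give $e_x\ge 3$;
\item $q$ lies on a bitangent, which would give two ramification points over one branch point.
\end{enumerate}
A smooth plane quartic has finitely many flexes ($24$ counted with multiplicity) and $28$ bitangents. The union of these flex tangent lines and bitangent lines is a finite union of lines in $\Pi$. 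A general $q\in\Pi\setminus C$ avoids it, so every $e_x\le 2$ and each fiber contains at most one ramification point. Hence $\deg R=12$ is realized by $12$ distinct simple ramification points, with $12$ distinct branch points in $B$. These correspond to $12$ distinct lines through $q$, each simply tangent to $C$ at one point and transverse to $C$ elsewhere.

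\textbf{Main obstacle.} The only delicate point is Step 3: one needs the finiteness of flexes and bitangents, and the fact that a smooth quartic in characteristic $0$ is not strange (its tangent lines do not all pass through one point). Both are standard for smooth plane curves of degree $\ge 2$ in characteristic $0$. Given these, the generality assumption on $q$ suffices, and since the lemma is stated for a general choice of $q$, this completes the argument.
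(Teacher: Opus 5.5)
Your proposal is correct and follows the same route as the paper: Riemann--Hurwitz for the degree-$4$ projection $\pi_q$ from the genus-$3$ curve $C$ gives $\deg R=12$, and generality of $q$ then makes all ramification simple. You also supply details the paper leaves implicit: smoothness of $C$ via Bertini together with the codimension of $\Sing(D)$, and distinctness of the $12$ branch points, obtained by taking $q$ off the finitely many flex and bitangent lines.
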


\begin{proof}
Since $C$ is a smooth plane quartic, it has genus $$g(C) = \dfrac{(4-1)(4-2)}{2} = 3. $$ Applying the Riemann--Hurwitz formula to the degree-$4$ map $\pi_q$ gives
\[
2g(C)-2 = 4\bigl(2g(\Pj^1)-2\bigr)+\deg R,
\]
so $4 = -8 + \deg R$, i.e. $\deg R = 12$.  
For general $q$, all ramification is simple, so there are exactly $12$ simple tangent lines through $q$.
\end{proof}

The $j$-invariant of binary quartics is a rational function whose polar divisor is the discriminant hypersurface. Restricting it to the Schubert slice $B$, we acquire a rational function whose poles occur at tangent lines to $C$. Since $C$ is a smooth plane quartic, no line in $B$ is contained in $C$, so this restriction is indeed a well-defined rational map $B\dashrightarrow \Pj^1$.

\begin{proposition}\label{prop:degree-on-slice}
The restriction of the $j$-map to the Schubert slice $B$ has degree $12$. Consequently,
\[
[F_a]\cdot \sigma_{8,7}=12.
\]
\end{proposition}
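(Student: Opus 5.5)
The plan is to compute the degree of the composite map $j_B\colon B\cong\Pj^1\dashrightarrow\Pj^1$, $\ell\mapsto j(\ell\cap C)$, by counting its poles with multiplicity. Since the source is a smooth curve, the rational map extends to a morphism. Its degree equals $\deg j_B^{*}(\infty)$, and we can read this off from the local behaviour of $j$ near the discriminant hypersurface.

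\textbf{Step 1: locate the poles.} Write $j=\mathcal{I}^3/\mathrm{disc}$ (up to normalizing constants), where $\mathcal{I}$ is the degree-$2$ invariant of binary quartics and $\mathrm{disc}$ the degree-$6$ discriminant. The poles of $j_B$ can only occur at lines $\ell\in B$ where $\ell\cap C$ is non-reduced. By \Cref{lem:12-tangents} there are exactly $12$ such lines, each simply tangent to $C$. At each, the divisor $\ell\cap C$ has type $2+1+1$, so $\mathcal{I}\neq 0$ there. (A quartic with a double root and two other distinct roots is not nullform; it is nullform only when it has a root of multiplicity $\geq 3$.) Hence each of the $12$ points is a genuine pole of $j_B$, and there are no other poles.

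\textbf{Step 2: each pole is simple.} Let $t$ be a local parameter on $B$ at a simple tangent line $\ell_0$. We need $\operatorname{ord}_{t}\,\mathrm{disc}(\ell_t\cap C)=1$. Here I would use that the map $B\to\Pj^4=\Pj(\Sym^4)$ of binary quartics is transverse to the discriminant hypersurface at points of type $2+1+1$. Concretely, the discriminant of the quartic restricted to $\ell_t$ vanishes to order equal to the local ramification contribution of $\pi_q$ at the tangency point, which is $1$ for a simple ramification point. Equivalently, near the double root the local equation looks like $x^2-ct$ with $c\neq0$, because $q\notin C$ and the tangency is simple (not a flex, not a bitangent, by genericity of $q$). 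The discriminant of $x^2-ct$ is proportional to $t$. Thus $\deg j_B=12$.

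\textbf{Step 3: transfer to the fiber class.} For general $a$, the fiber $F_a=\overline{p^{-1}(a)}$ meets the cycle $B$ transversally at the $12$ points of $j_B^{-1}(a)$. All of these lie in $U_{\mathrm{sm}}$, since $j_B^{-1}(a)$ avoids the tangent lines when $a\neq\infty$. By Kleiman transversality, or by genericity of $(\Pi,q)$ with $\PGL(V)$-invariance not needed, $B$ avoids the indeterminacy locus of $p$ and meets $F_a$ properly. This gives $[F_a]\cdot\sigma_{8,7}=\deg j_B=12$. The indeterminacy locus lies in codimension $\geq2$, being contained in the common vanishing of $\mathcal{I}$ and $\mathrm{disc}$, and it meets a general $B$ only when the codimension is $\le 1$ on the $15$-dimensional family of slices.

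\textbf{Main obstacle.} I expect the main obstacle to be Step 2 together with the genericity claims: that for general $(\Pi,q)$ all $12$ tangencies are simple, non-flex, with distinct tangent lines, and that $B$ misses the base locus $\{\mathcal{I}=\mathrm{disc}=0\}$. I would handle this with a dimension count. Pencils whose discriminant has a triple root, or two double roots, form loci of codimension $\geq2$ in $G$, so a general one-dimensional Schubert slice $B$ (a general translate under $\GL(\Sym^2V^\vee)$) avoids them.
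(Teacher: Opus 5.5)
Your proposal is correct and follows essentially the same route as the paper: count the poles of $j|_B$ as the $12$ simple tangent lines from $q$, show each is a simple pole because the discriminant vanishes to first order there while $I\neq 0$, and then identify $[F_a]\cdot\sigma_{8,7}$ with the degree of $j|_B$. Your justification that $I\neq 0$ at a quartic of type $2+1+1$ is cleaner than the paper's appeal to Kleiman transversality at that step: on the discriminant locus $I=0$ forces $J=0$, so the quartic is a nullform and has a root of multiplicity at least $3$.
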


\begin{proof}
By construction, a point $\ell\in B$ gives a binary quartic with repeated root if and only if $\ell$ is tangent to $C$. We need to understand the local behavior of the $j$-map near such a tangent line.

We invoke invariant theory for binary quartics (see \cite[Chapter~III, \S1]{Silverman} and \cite[\S6.3]{Dolgachev}).
A binary quartic $f(s,t)=\sum \binom{4}{i}a_is^{4-i}t^i$ has two fundamental $\mathrm{SL}_2$-invariants:
\[
I(f) = a_0a_4 - 4a_1a_3 + 3a_2^2, \qquad
J(f) = a_0a_2a_4 + 2a_1a_2a_3 - a_2^3 - a_0a_3^2 - a_1^2a_4,
\]
of degrees $2$ and $3$ respectively, and the discriminant
\[
\Delta(f) = I^3 - 27 J^2.
\]
(Our sign convention follows \cite[\S6.3.4]{Dolgachev}.) The $j$-invariant of the binary quartic (equivalently, the $j$-invariant of the elliptic curve given by the double cover of $\Pj^1$ branched at the four roots of $f$) is the ratio
\begin{equation}\label{eq:j-binary-quartic}
j(f) = 1728\,\frac{I^3}{\Delta} = 1728\,\frac{I^3}{I^3 - 27J^2}.
\end{equation}
This is a well-defined rational function on the space of binary quartics with nonvanishing determinant, i.e. with four distinct roots. The identification with the elliptic-curve $j$-invariant is classical; see \cite[Proposition~III.1.7]{Silverman}.

Now, consider a simple tangent line $\ell_0\in B$. Near $\ell_0$, the family of binary quartics obtained by restricting the determinantal quartic to lines through $q$ varies algebraically with the parameter on $B$. Let $u$ be a local coordinate on $B$ centered at $\ell_0$. Since $\ell_0$ is a simple tangent to $C$, exactly one pair of the four intersection points of $\ell$ with $C$ coalesces transversely as $u\to 0$: the local intersection multiplicity of $\ell_0$ with $C$ at the tangency point is $2$, while the remaining two intersection points stay distinct and away from the tangency. In terms of the discriminant, this means that $\Delta$ has a simple zero at $u=0$: the discriminant of a one-parameter family of degree-$4$ polynomials vanishes to order $1$ at a generic simple tangency.

We readily see that the condition $I(f)=0$ is a closed condition of codimension $1$ in the space of binary quartics. Restricted to the $1$-parameter family parametrized by $B$, it is satisfied at only finitely many points. Moreover, for general $\Pi$ and $q$, we may invoke Kleiman transversality \cite{Kleiman} to ensure that none of the $12$ simple tangent lines belongs to this finite set. Consequently, at each simple tangency $\ell_0$, we have $I\neq 0$, and since $\Delta$ vanishes to order $1$ while $I^3$ does not vanish, the expression \eqref{eq:j-binary-quartic} shows that $j$ has a simple pole at $u=0$.

By Lemma~\ref{lem:12-tangents}, there are exactly $12$ such simple tangent lines, and for general $\Pi$ and $q$, these are the only poles of $j|_B$. Since a rational function on $\Pj^1$ has an equal number of poles and zeros when counted with multiplicities, $j|_B$ has degree $12$. Because $B$ represents the complementary Schubert cycle $\sigma_{8,7}$ (see Section~\ref{subsec:schubert}), the intersection number $[F_a]\cdot \sigma_{8,7}$ counts the number of points of $B$ lying in the fiber $F_a$, which is precisely the number of preimages of a general value $a$ under $j|_B$. This is the degree of the restricted map, so it equals $12$.
\end{proof}

\begin{remark}
We may also recover this count from the Pl\"ucker formula as follows: from a general point of the plane, there are $d(d-1)-2\delta-3\kappa=12$ tangent lines to a smooth plane quartic ($d=4$, $\delta=\kappa=0$). 
\end{remark}

Now, we are equipped to compute the class of a general fiber. 

\begin{theorem}\label{thm:general-fiber-class}
For every smooth value $a\in \Pj^1\setminus\{\infty\}$, the
scheme-theoretic fiber satisfies
\[
[F_a]=12\sigma_1
\]
in $A^1(G)$.
\end{theorem}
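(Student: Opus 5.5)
The plan is to use that $A^1(G)\cong\mathbb Z\sigma_1$ is generated by $\sigma_1$ (Section~\ref{subsec:schubert}). Write $[F_a]=m\sigma_1$ and recover $m$ by pairing with the complementary class, $m=[F_a]\cdot\sigma_{8,7}$. Proposition~\ref{prop:degree-on-slice} gives $[F_a]\cdot\sigma_{8,7}=12$, so $m=12$. Most of the work is therefore justifying that the pairing really is computed by the naive count on the slice $B$.

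First I would make $F_a$ precise as a divisor. The $j$-map is $1728\,I^3/(I^3-27J^2)$ composed with $W\mapsto \Delta_W$. On $G$, $I(\Delta_W)$ and $\Delta(\Delta_W)$ are sections of $\mathcal O(2\cdot 4)$ and $\mathcal O(6\cdot4)$ of the Plücker bundle. Indeed, the coefficients of $\Delta_W$ are quartic in a basis of $W$ and change by $\det^4$ under change of basis, so the coefficients give sections of $\mathcal O_G(4)$. Thus $I^3$ and $\Delta$ are both sections of $\mathcal O_G(24)$. This gives a second, independent check: $F_a$ is defined for finite $a$ as the zero scheme of the section $(1728-a)I^3+a\cdot 27J^2$. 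Its class is then automatically $24\sigma_1$ minus the contribution of the base locus $\{I=J=0\}$, or more precisely the closure of the fiber in $U_{\mathrm{sm}}$.

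I would not rely on this section count directly, since base-locus components (e.g.\ where $I$ and $J$ vanish identically) may be divisorial. Instead I would define $F_a$ as the closure in $G$ of the scheme-theoretic fiber $p^{-1}(a)\cap U_{\mathrm{sm}}$, an effective Cartier divisor on the smooth variety $G$.

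Next I would show that for general $\Pi$ and $q$ the curve $B$ meets $F_a$ properly, transversally, and only inside $U_{\mathrm{sm}}$. For properness and transversality, apply Kleiman transversality \cite{Kleiman} for the transitive $\GL(\Sym^2V^\vee)$-action on $G$, under which $B$ moves in a family of translates. A general translate then meets $F_a$ transversally at smooth points of $(F_a)_{\mathrm{red}}$ and avoids the codimension $\ge2$ locus $\overline{F_a}\setminus (F_a\cap U_{\mathrm{sm}})$. As a result, $[F_a]\cdot[B]$ equals the number of points of $B\cap F_a$, each counted with the multiplicity of $F_a$ along the corresponding component.

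Finally I would identify this count with $\deg(j|_B)$. At a point $\ell\in B\cap U_{\mathrm{sm}}$, the local equation of $F_a$ pulled back to $B$ is $j-a$. Its order of vanishing is the local intersection multiplicity, so the total is the number of preimages of $a$ under $j|_B$ counted with multiplicity, which is $\deg(j|_B)=12$ by Proposition~\ref{prop:degree-on-slice}. The condition "$a$ smooth" ensures that $a\neq\infty$, so no preimages lie at the tangent lines. Since $j|_B$ has no other poles, all $12$ preimages lie in $B\cap U_{\mathrm{sm}}$.

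The main obstacle is the middle step: proving that the closure of $F_a$ acquires no intersection with $B$ along the boundary $G\setminus U_{\mathrm{sm}}$. In other words, a general $B$ meets the boundary only at the $12$ simple tangencies, where $j=\infty\neq a$. For this I would argue that $B\cap(G\setminus U_{\mathrm{sm}})$ consists exactly of the lines through $q$ tangent to $C$. Each of these has $j=\infty$ because $I\ne0$ there, as shown in the proof of Proposition~\ref{prop:degree-on-slice}, so none lies on $\overline{F_a}$ for finite $a$.
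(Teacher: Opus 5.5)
Your main line is the paper's proof: write $[F_a]=m\sigma_1$, pair with $\sigma_{8,7}$, and read off $m=12$ from Proposition~\ref{prop:degree-on-slice}. Your extra steps make explicit what the paper leaves implicit. These are moving $B$ by Kleiman so that it misses $F_a\setminus U_{\mathrm{sm}}$, observing that $p$ is regular with value $\infty$ at the $12$ tangent lines because $I\neq0$ there, and counting zeros of $j|_B-a$ with multiplicity. They also cover $a=0,1728$, where the paper's slice argument only speaks of a general value. That part is sound.

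The ``second, independent check'', however, is miscomputed, and as written it contradicts your conclusion.

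\begin{itemize}
\item \textbf{The weights are wrong.} The coefficients of $\Delta_W$ do not change by a character under $g\in\GL(W)$. The quartic $\Delta_W$ is a section of $\Sym^4\mathcal S^\vee$, and for $g=\lambda\cdot\mathrm{Id}$ the coefficients scale by $\lambda^4=(\det g)^2$, not $(\det g)^4$. Hence $I$, $J$ and $\Delta=I^3-27J^2$ are sections of $\mathcal O_G(4)$, $\mathcal O_G(6)$ and $\mathcal O_G(12)$. They are not sections of $\mathcal O_G(8)$ and $\mathcal O_G(24)$, as you claim for $I$ and $\Delta$.
\item \textbf{Your own slice confirms this.} Write the lines of $B$ as $\langle q,r_0+ur_1\rangle$. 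Then $\mathcal O_G(1)|_B$ has degree $1$, and the coefficient of $s^{4-i}t^i$ has degree $i$ in $u$. So $I|_B$ has degree $4$ and $\Delta|_B$ has degree $12$, matching the $12$ tangents.
\item \textbf{The proposed rescue is unavailable.} With your weights the check would give $24\sigma_1$, and you suggest subtracting a divisorial base locus. But $\{I=J=0\}$ is the locus where $\Delta_W$ has a root of multiplicity $\ge3$ or vanishes identically. By Lemmas~\ref{lem:lines-meeting-singD} and~\ref{lem:bad-tangent-directions} this has codimension $\ge2$, so there is nothing divisorial to subtract.
\end{itemize}

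With the correct weights, the check is in fact a complete proof on its own. Consider the section $1728I^3-a\Delta\in H^0(\mathcal O_G(12))$.
\begin{itemize}
\item On $U_{\mathrm{sm}}$ it cuts out the scheme-theoretic fiber, since $j-a=(1728I^3-a\Delta)/\Delta$ there.
\item It has no component inside $\{\Delta=0\}$ for finite $a$, since such a component would lie in $\{I=J=0\}$.
\end{itemize}
Hence $F_a=\operatorname{div}(1728I^3-a\Delta)$ and $[F_a]=12\sigma_1$ for every finite $a$. Likewise $[\operatorname{div}\Delta]=12\sigma_1$. Neither the slice computation nor a resolution of $p$ is needed.
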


\begin{proof}
Write $[F_a]=m\sigma_1$. Intersecting with the complementary Schubert cycle $\sigma_{8,7}$ and using Proposition~\ref{prop:degree-on-slice} gives
\[
m = [F_a]\cdot \sigma_{8,7}=12.
\]
Thus, $[F_a]=12\sigma_1$.
\end{proof}

\section{The special fiber and the tangency divisor}

We now turn our attention to the special fiber $p=\infty$. Set-theoretically, this is the repeated-root locus for discriminant quartics.

\begin{definition}
Let $R\subseteq G$ be the locus of pencils whose discriminant quartic has a repeated root. Let
\[
F_\infty:=\overline{R}\subseteq G.
\]
Define the tangency locus
\[
T:=\overline{\{W\in G : \ell_W \text{ is tangent to } D \text{ at a smooth point, and otherwise transverse to } D\}}.
\]
\end{definition}

The core of this section is to prove that $T$ is the unique divisorial component of $F_\infty$.

\subsection{A normal form for the generic tangency pencil}

First, we put a general point of $T$ into a convenient normal form.

\begin{lemma}\label{lem:normal-form}
Let $W\in T$ be a general point. Then, $W$ is $\PGL(V)$-equivalent to a pencil of the form
\[
W_{a,b}:=
\left \langle
x_0^2+x_1^2+x_2^2,\
 x_0x_3 + a x_1^2 + b x_2^2
\right \rangle
\]
with $a,b\in k^\times$ and $a\neq b$. Its discriminant quartic has root type $2+1+1$, namely,
\[
\Delta_{W_{a,b}}(1,t) = -\frac14 t^2(1+at)(1+bt).
\]
\end{lemma}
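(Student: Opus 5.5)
The plan is to translate tangency to $D$ at a smooth point into a condition on the pencil, and then use the stabilizer of the rank-$3$ member to normalize the other generator.

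\textbf{Step 1: what tangency means.} A smooth point of the determinant quartic $D$ is a quadric $Q$ of rank exactly $3$, with a one-point vertex $v$. The differential of $\det$ at $Q$ is $Q'\mapsto \operatorname{adj}(Q)\cdot Q'$. Since $\operatorname{adj}(Q)$ is proportional to $vv^\top$, the tangent hyperplane is
\[
T_QD=\{Q' : Q'(v)=0\}.
\]
So $\ell_W$ is tangent to $D$ at $Q$ exactly when $W=\langle Q,Q'\rangle$ with $Q'(v)=0$, i.e. when the vertex of $Q$ lies in the base locus. This matches the discussion following the definition of $U_{\mathrm{sm}}$.

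\textbf{Step 2: first normalization.} Use $\PGL(V)$ to put $Q=x_0^2+x_1^2+x_2^2$, so that $v=e_3$. Then $Q'$ has no $x_3^2$ term, and we can write
\[
Q'=x_3L(x_0,x_1,x_2)+q(x_0,x_1,x_2).
\]
If $L=0$, every member of the pencil is singular at $v$, so $\Delta_W\equiv 0$; this contradicts transversality elsewhere. Hence $L\neq 0$. The remaining freedom consists of:
\begin{itemize}
\item the orthogonal group $O(Q)$ acting on $x_0,x_1,x_2$;
\item substitutions $x_3\mapsto cx_3+m(x_0,x_1,x_2)$, which change $q$ by $L\cdot m$;
\item replacing $Q'$ by $Q'+\mu Q$, and rescaling $Q'$.
\end{itemize}

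\textbf{Step 3: reaching $W_{a,b}$.} For general $W$ the form $L$ is nonisotropic for $Q$. Then $O(Q)$ moves $L$ to a multiple of $x_0$, and rescaling $x_3$ gives the term $x_0x_3$. Next, a shift $x_3\mapsto x_3+m$ kills every monomial of $q$ containing $x_0$, leaving a binary form $q(x_1,x_2)$. The subgroup $O(x_1^2+x_2^2)\subset O(Q)$ fixing $x_0$ simultaneously diagonalizes $q(x_1,x_2)$ against $x_1^2+x_2^2$, as in Proposition~\ref{prop:diag}; this needs the two eigenvalues to be distinct, which holds generically. The result is
\[
Q'=x_0x_3+ax_1^2+bx_2^2.
\]
Adding $\mu Q$ would only reintroduce an $x_0^2$ term, which another $x_3$-shift removes, so this normal form is consistent.

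\textbf{Step 4: the discriminant and the conditions on $a,b$.} In the ordered basis $(x_0,x_3,x_1,x_2)$, the matrix of $sQ+tQ'$ is block diagonal with blocks
\[
\begin{pmatrix}s&t/2\\ t/2&0\end{pmatrix},\qquad s+at,\qquad s+bt.
\]
Hence
\[
\Delta_{W_{a,b}}(s,t)=-\tfrac14t^2(s+at)(s+bt),
\]
which gives the stated formula at $s=1$. The double root $t=0$ is the tangency. Transversality elsewhere means the roots $-1/a$ and $-1/b$ are simple and differ from $0$, i.e. $a,b\in k^\times$ and $a\neq b$. In particular the generic diagonalizability in Step 3 is automatic for points of $T$.

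\textbf{Main obstacle: justifying ``general''.} The non-isotropy of $L$ and the distinct-eigenvalue condition must be shown to be nonempty open conditions on the irreducible variety $T$. For irreducibility, I would present $T$ as the image of the irreducible incidence variety of pairs $(Q,W)$ with $Q$ of rank $3$, $Q\in W$, and $W\subset T_QD$. Nonemptiness of both open conditions is witnessed by $W_{1,2}$ itself, the pencil $W_{\mathrm{node}}$ of Theorem~\ref{thm:main}.
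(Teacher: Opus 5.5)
Your proof is correct apart from one step, and it follows the paper's route: the adjugate description of $T_{[Q]}D$ (the paper's condition $c=0$), $L\neq 0$, moving $L$ to $x_0$ with $O(Q)$, an $x_3$-shift clearing the $x_0$-terms of $q$, diagonalizing the residual binary form, and the block determinant.

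You are more careful than the paper about two hypotheses it uses silently: non-isotropy of $L$, and distinct eigenvalues for the $O(2)$-diagonalization. Neither actually needs a genericity argument. Write $L=\ell^{\top}x$ and let $q$ also denote the $3\times 3$ matrix of $q$. Then
\[
\det(sQ+tQ')=-\tfrac{t^2}{4}\,\ell^{\top}\operatorname{adj}(sI_3+tq)\,\ell=-\tfrac{t^2}{4}\bigl(s^2\,\ell^{\top}\ell+O(t)\bigr),
\]
so the root $[1:0]$ (that is, $[Q]$) has multiplicity exactly $2$ iff $L$ is non-isotropic. After normalizing $L$, the residual factor is $\det(sI_2+tB)$, which is squarefree iff $B$ has distinct eigenvalues. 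Both conditions therefore hold at every pencil of contact type $2+1+1$, and your ``main obstacle'' goes away.

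The incorrect step is in Step 4; the paper's proof makes the same leap. Transversality does not force $a,b\neq 0$. If $a=0$, the factor $s+at$ is just $s$. Its root $[s:t]=[0:1]$ is simple and distinct from the double root $[1:0]$; the affine root $-1/a$ has only moved to $t=\infty$. So transversality gives only $a\neq b$.

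The repair uses tools you already have. One option is to choose $Q'$ to be a nonsingular member of the pencil from the start. Every member other than $Q$ satisfies $Q'(v)=0$, your coordinate changes multiply $\det Q'$ by a nonzero factor, and $\det(x_0x_3+ax_1^2+bx_2^2)=-\tfrac14 ab$. The other option is your move $Q'\mapsto Q'+\mu Q$ followed by $x_3\mapsto x_3-\mu x_0$. This sends $W_{a,b}$ to $W_{a+\mu,\,b+\mu}$, and any $\mu\notin\{-a,-b\}$ makes both coefficients nonzero while keeping them distinct.
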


\begin{proof}
Let $W=\langle Q_0,Q_1\rangle$ with $\ell_W$ tangent to $D$ at a smooth point $[Q_0]\in D$. Since $[Q_0]\in D_{\mathrm{sm}}$, the quadric $Q_0$ has rank $3$, and after a projective change of coordinates if necessary, we may assume that
\[
Q_0=x_0^2+x_1^2+x_2^2.
\]
Thus, $\ker(Q_0)=\langle e_3\rangle$.

Write
\[
Q_1=L(x_0,x_1,x_2)x_3 + q(x_0,x_1,x_2) + c x_3^2.
\]
The condition of tangency of the line $\ell_W$ to $D$ at $[Q_0]$ means that the linear term of
$\det(Q_0+tQ_1)$
in $t$ vanishes. Writing $A=\operatorname{diag}(1,1,1,0)$ for the matrix of $Q_0$, we have
\[
\frac{d}{dt}\det(A+tB)\big|_{t=0}=\operatorname{tr}(\operatorname{adj}(A)B),
\]
and here $\operatorname{adj}(A)$ is the matrix with a single $1$ in the $(4,4)$-entry. Therefore, the tangency condition translates exactly to the vanishing of the $x_3^2$-coefficient of $Q_1$, i.e. $c=0$. Moreover, $L\neq 0$, as otherwise every quadric in the pencil would be singular at $[0:0:0:1]$ and the line would lie in $D$.

By using the orthogonal group of the quadratic form $x_0^2+x_1^2+x_2^2$, we may arrange that $L=x_0$. Replacing $x_3$ by $x_3+\ell(x_0,x_1,x_2)$ leaves $Q_0$ unchanged, and it transforms
\[
Q_1=x_0x_3+q
\]
into
\[
x_0x_3+(x_0\ell+q).
\]
Choosing $\ell=-\alpha x_0-\beta x_1-\gamma x_2$ cancels the $x_0^2$, $x_0x_1$, and $x_0x_2$ terms of $q$. Finally, diagonalizing the remaining quadratic form in $x_1,x_2$ gives the desired normal form.

A direct determinant computation shows that
\[
\det(Q_0+tQ_1) = -\frac14 t^2(1+at)(1+bt).
\]
For a general tangent pencil, the two residual roots are distinct and nonzero, so $a,b\neq 0$ and $a\neq b$; the proposition follows.
\end{proof}

\begin{proposition}\label{prop:unique-generic-boundary-orbit}
All pencils of the form $W_{a,b}$ with $a,b\in k^\times$ and $a\neq b$ lie in a single $\PGL(V)$-orbit.
\end{proposition}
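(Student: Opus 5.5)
The plan is to show directly that every $W_{a,b}$ can be carried to the single pencil $W_{1,2}$. The tool is the group of substitutions that fix the quadric $Q_0=x_0^2+x_1^2+x_2^2$ up to scalar and preserve the shape of the normal form. The key observation is that a point of $G$ is the $2$-dimensional span, not an ordered pair of quadrics. So besides coordinate changes in $\PGL(V)$, we may freely change the basis of the pencil to $\langle Q_0,\ \mu Q_1+c\,Q_0\rangle$ with $\mu\in k^\times$, $c\in k$, and this does not change the point $W_{a,b}\in G$. I will show that these two kinds of moves together realize the full affine group $\{u\mapsto \mu u+c\}$ acting diagonally on the pair $(a,b)$.

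\emph{Step 1 (translation).} Replace the second generator by $Q_1+cQ_0$. This gives
\[
x_0x_3+c\,x_0^2+(a+c)x_1^2+(b+c)x_2^2 .
\]
Now apply the substitution $x_3\mapsto x_3-c\,x_0$, fixing $x_0,x_1,x_2$. It lies in $\GL(V)$ and leaves $Q_0$ untouched, since $Q_0$ does not involve $x_3$. It kills the $x_0^2$ term. Hence $W_{a,b}$ and $W_{a+c,\,b+c}$ lie in the same orbit.

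\emph{Step 2 (scaling).} Replace the second generator by $\mu Q_1$, obtaining $\mu x_0x_3+\mu a x_1^2+\mu b x_2^2$. Then substitute $x_3\mapsto \mu^{-1}x_3$, which again fixes $Q_0$. This shows that $W_{a,b}$ and $W_{\mu a,\,\mu b}$ lie in the same orbit.

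\emph{Step 3 (transitivity).} The affine group of the line acts simply transitively on ordered pairs of distinct points. Given $a\neq b$, take $\mu=(b-a)^{-1}$ and $c=1-\mu a$; then $(\mu a+c,\ \mu b+c)=(1,2)$. Composing Steps 1 and 2 therefore moves $W_{a,b}$ to $W_{1,2}$, and the proposition follows.

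The intermediate pairs $(\mu a,\mu b)$ and $(a+c,b+c)$ may have a zero entry. This is harmless, because the substitutions are valid for all values and the endpoints $(a,b)$ and $(1,2)$ both lie in the allowed range $a,b\in k^\times$, $a\neq b$.

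As a consistency check, the discriminant computed in Lemma~\ref{lem:normal-form} has a double root at $t=0$ and simple roots at $-1/a$ and $-1/b$. The moves above act on the parameter line by M\"obius transformations fixing the double root. Three marked points, one of them distinguished, carry no modulus, which is consistent with Proposition~\ref{prop:orbit-classification} degenerating to a single orbit.

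The only step needing real care is verifying that each substitution preserves $Q_0$ exactly and does not reintroduce cross terms such as $x_0x_1$ or an $x_3^2$ term. This is immediate from the fact that all the moves touch only $x_3$ and $Q_0$ is free of $x_3$. I expect no genuine obstacle: the argument is a routine check that the residual symmetry group acts transitively.
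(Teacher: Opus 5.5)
Your proof is correct, but it reaches the conclusion by a different route from the paper's. The paper argues through the discriminant. Since $\PGL_2$ acts transitively on divisors of type $2+1+1$, it reparametrizes $\ell_{W_{a,b}}$ so that the discriminant divisor becomes $2[0]+[-1]+[-1/2]$. It then re-applies the coordinate changes of Lemma~\ref{lem:normal-form} and uses $\det(s\,gQ_0g^\top+t\,gQ_1g^\top)=(\det g)^2\det(sQ_0+tQ_1)$ to read off $\{a',b'\}=\{1,2\}$. It finishes with the swap $x_1\leftrightarrow x_2$.

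You never use the discriminant as an invariant. Instead you compute directly what the basis changes $Q_1\mapsto \mu Q_1+cQ_0$ do once they are compensated by substitutions in $x_3$ alone: they act on $(a,b)$ by the affine group $u\mapsto \mu u+c$. In the coordinate $u=-1/t$, whose points $u=a,b$ are the residual roots, this is exactly the group of M\"obius transformations fixing the double root at $u=\infty$. So both arguments exploit the same symmetry.

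Your version has several advantages:
\begin{enumerate}[label=\textup{(\roman*)}]
\item It is completely explicit. You do not need to re-run the normalization of Lemma~\ref{lem:normal-form} and check that its moves act on the chosen basis without reparametrizing it.
\item It lands on the ordered pair $(1,2)$ directly, because the affine group is simply transitive on ordered pairs, so no final swap is needed.
\item It shows that the hypothesis $a,b\in k^\times$ is unnecessary: every $W_{a,b}$ with $a\neq b$ lies in this orbit. The case $a=0$ just means that the chosen generator $Q_1$ is itself singular.
\end{enumerate}

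The paper's version is more conceptual. It explains the single orbit as the absence of moduli for a $2+1+1$ divisor on $\Pj^1$, in the same language as the orbit classification on $U_{\mathrm{sm}}$.
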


\begin{proof}
The discriminant divisor of $W_{a,b}$ is
\[
2[0]+[-1/a]+[-1/b].
\]
The group $\PGL_2$ acts transitively on divisors of type $2+1+1$ on $\Pj^1$: such a divisor is determined by a labeled double point together with an unordered pair of distinct simple points, and $\PGL_2$ acts simply $3$-transitively on ordered triples of distinct points.

Now, let $W_{a,b}\in G$ be one of our pencils. Replacing the ordered basis $(Q_0,Q_1)$ of the same $2$-plane $W_{a,b}$ by another basis does not change the point of $G$; it merely reparameterizes the line $\ell_{W_{a,b}}\cong \Pj^1$. We may choose such a reparameterization so that the discriminant divisor becomes
\[
2[0]+[-1]+[-1/2].
\]
Next, we apply the coordinate changes from the proof of Lemma~\ref{lem:normal-form}. These transformations preserve the abstract discriminant divisor, because if $g\in \GL(V)$, then
\[
\det\bigl(s\,gQ_0g^\top+t\,gQ_1g^\top\bigr)=(\det g)^2\det(sQ_0+tQ_1).
\]
Therefore, the resulting normal form $W_{a',b'}$ still has discriminant divisor $2[0]+[-1]+[-1/2]$, so
\[
\{-1/a',-1/b'\}=\{-1,-1/2\}.
\]
Hence, $\{a',b'\}=\{1,2\}$. Exchanging $x_1$ and $x_2$ swaps $W_{1,2}$ and $W_{2,1}$, so all such pencils are $\PGL(V)$-equivalent to
\[
W_{\mathrm{node}}:=\big\langle x_0^2+x_1^2+x_2^2,\ x_0x_3+x_1^2+2x_2^2\big\rangle.
\]
This proves the claim.
\end{proof}

\begin{definition}
We denote the orbit of $W_{\mathrm{node}}$ by $\mathcal O_{\mathrm{node}}$.
\end{definition}

\begin{proposition}\label{prop:nodal-model}
The complete intersection $X_{W_{\mathrm{node}}}\subseteq \Pj^3$ has a unique singularity, and this singularity is an ordinary node. In particular, $X_{W_{\mathrm{node}}}$ is a quartic complete intersection of arithmetic genus $1$ and geometric genus $0$.
\end{proposition}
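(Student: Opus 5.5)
We work directly with the explicit equations
\[
Q_0=x_0^2+x_1^2+x_2^2,\qquad Q_1=x_0x_3+x_1^2+2x_2^2,
\]
and apply the Jacobian criterion to $X:=X_{W_{\mathrm{node}}}=V(Q_0,Q_1)$. The singular points of $X$ are the points of $X$ at which $\nabla Q_0$ and $\nabla Q_1$ are linearly dependent. Here
\[
\nabla Q_0=(2x_0,2x_1,2x_2,0),\qquad \nabla Q_1=(x_3,2x_1,4x_2,x_0).
\]

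We split into two cases according to whether $\nabla Q_0$ vanishes.

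\emph{Case $\nabla Q_0=0$.} Then $x_0=x_1=x_2=0$, so the point is $p=[0:0:0:1]$. This point lies on $X$, and it is the vertex of the cone $Q_0$, consistent with the discussion after the definition of $U_{\mathrm{sm}}$.

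\emph{Case $\nabla Q_0\neq 0$.} Then $\nabla Q_1=\mu\nabla Q_0$ for some scalar $\mu$. The last coordinate forces $x_0=0$. The middle coordinates give $2x_1=2\mu x_1$ and $4x_2=2\mu x_2$, so $x_1=0$ or $x_2=0$ (since $\mu$ cannot equal both $1$ and $2$). Combining with $Q_0=x_1^2+x_2^2=0$ forces $x_1=x_2=0$, which contradicts $\nabla Q_0\neq 0$.

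Hence $p$ is the unique singular point of $X$.

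Next we show that $p$ is an ordinary node. In the affine chart $x_3=1$, the equation $Q_1=0$ reads $x_0=-(x_1^2+2x_2^2)$. So $Q_1$ is smooth at $p$, and we may eliminate $x_0$. Substituting into $Q_0$, the germ of $X$ at $p$ is the plane curve germ
\[
x_1^2+x_2^2+(x_1^2+2x_2^2)^2=0.
\]
Its quadratic part $x_1^2+x_2^2=(x_1+ix_2)(x_1-ix_2)$ is a nondegenerate quadratic form, since $\operatorname{char}k=0$ and $k$ is algebraically closed. By the Morse lemma, or directly because the tangent cone consists of two distinct lines, the singularity is an ordinary node, with $\delta=1$.

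Finally we compute the genera. $X$ is a complete intersection of type $(2,2)$, so $\deg X=4$ and $\omega_X\cong\mathcal O_X(2+2-4)=\mathcal O_X$. Thus $p_a(X)=1$; equivalently, the Hilbert polynomial of a $(2,2)$ complete intersection is $4n$. We must also check that $X$ is an integral curve.
\begin{itemize}
\item $X$ is a curve because $Q_0$ and $Q_1$ share no common factor ($Q_0$ is irreducible of rank $3$ and $Q_1\notin kQ_0$).
\item $X$ is reduced because it is a local complete intersection, hence Cohen--Macaulay, and it is smooth away from one point, so it is generically reduced.
\item $X$ is irreducible: a reducible quartic complete intersection with a unique singular point is impossible, since any two components of a connected curve must meet, and each intersection point is singular. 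One must also exclude a configuration such as a line meeting a twisted cubic in one point. This follows because $p_a=1$ together with a single node forces the components to meet in two points counted properly. A cleaner route is to compute the normalization directly.
\end{itemize}

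With $X$ integral, the genus formula gives $g=p_a-\delta=1-1=0$. The normalization can also be written explicitly. Projecting from $p$ maps $X$ birationally onto the conic $x_1^2+x_2^2+x_0^2=0$ in the plane $x_3=0$: substituting $x_0=-(x_1^2+2x_2^2)/x_3$ shows that the map is birational with inverse rational. This exhibits $\mathbb{P}^1$ as the normalization, which settles irreducibility and genus $0$ simultaneously.

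The main obstacle is this global integrality/irreducibility point. The local Jacobian and node computations are routine, but the genus statement needs $X$ to be irreducible, and I would secure it through the explicit projection from $p$.
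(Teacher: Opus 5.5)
Your proof is correct and follows the paper's argument: the same Jacobian-rank case split shows $p=[0:0:0:1]$ is the only singular point, the same elimination of $x_0$ via $Q_1$ in the chart $x_3=1$ exhibits the node, and the genus comes from $g=p_a-\delta=1-1$. Your one real addition is the integrality check, which the paper tacitly assumes when it subtracts $\delta$; your projection from $p$ settles it cleanly. There is one slip in how you invert the projection: you should solve $Q_1$ for $x_3=-(x_1^2+2x_2^2)/x_0$, not for $x_0$, using that $x_0\neq 0$ on $X\setminus\{p\}$. This gives $X\setminus\{p\}\cong\{x_0^2+x_1^2+x_2^2=0\}\setminus\{[0:1:\pm i]\}$, hence normalization $\Pj^1$.
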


\begin{proof}
We work on the affine chart $x_3=1$ near the point $p=[0:0:0:1]$. There, the equations become
\[
 x_0^2+x_1^2+x_2^2=0,
 \qquad
 x_0+x_1^2+2x_2^2=0.
\]
Eliminating $x_0$ gives the local equation
\[
 x_1^2+x_2^2 + (\text{terms of order}\ge 4) =0.
\]
The quadratic term is nondegenerate, so the singularity at $p$ is an ordinary node. Next, to see that $p$ is the unique singular point, write
\[
F=x_0^2+x_1^2+x_2^2,
\qquad
G=x_0x_3+x_1^2+2x_2^2.
\]
A singular point of the complete intersection must satisfy $F=G=0$ and
\[
\operatorname{rank}
\begin{pmatrix}
2x_0 & 2x_1 & 2x_2 & 0\
x_3 & 2x_1 & 4x_2 & x_0
\end{pmatrix}<2.
\]
If the first row vanishes, then $x_0=x_1=x_2=0$, so the point is $p$. Otherwise, the two rows are proportional. The last column then makes $x_0=0$, and comparing the second and third columns shows that at most one of $x_1,x_2$ can be nonzero. The equations $F=G=0$ then force $x_1=x_2=0$, again giving $p$. Thus, $p$ is the unique singular point. Since a complete intersection of two quadrics in $\Pj^3$ has arithmetic genus $1$, and a single node lowers the geometric genus by one, the normalization has genus $0$.
\end{proof}

\subsection{Codimension estimates for the bad repeated-root loci}

We now prove that every repeated-root phenomenon other than generic simple tangency has codimension at least $2$ in $G$.

First, we establish the following straightforward lemma.

\begin{lemma}\label{lem:lines-meeting-singD}
The locus of lines $\ell\subseteq \Pj^9$ meeting $\Sing(D)$ has codimension at least $2$ in $G$.
\end{lemma}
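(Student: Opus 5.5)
We identify $\Sing(D)$ and compute its codimension in $\Pj^9$, then count lines through it with an incidence correspondence.

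\emph{Step 1: $\Sing(D)$.} The determinant hypersurface $D=\{\det Q=0\}$ is smooth exactly at the rank-$3$ quadrics. The derivative of $\det$ at $Q$ is $B\mapsto\operatorname{tr}(\operatorname{adj}(Q)B)$, and $\operatorname{adj}(Q)$ vanishes precisely when $\operatorname{rank}Q\le 2$. Hence
\[
\Sing(D)=D_{\le 2}:=\{[Q]\in\Pj^9:\operatorname{rank}Q\le 2\}.
\]
A rank-$\le 2$ quadric in $\Pj^3$ is a pair of planes, so $D_{\le 2}$ is the image of $\Sym^2(\Pj^{3\vee})$. This has dimension $6$, so $\Sing(D)$ has codimension $3$ in $\Pj^9$. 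Equivalently, symmetric $4\times 4$ matrices of corank $\ge 2$ have codimension $\binom{3}{2}=3$.

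\emph{Step 2: incidence correspondence.} Consider
\[
I:=\{(x,\ell)\in \Sing(D)\times G : x\in\ell\}.
\]
The fiber of $I\to\Sing(D)$ over a point $x$ is the space of lines in $\Pj^9$ through $x$, which is a $\Pj^8$. Therefore $\dim I=6+8=14$. The locus of lines meeting $\Sing(D)$ is the image of $I$ under the second projection. This image is closed, since the projection is proper, and has dimension at most $14=\dim G-2$. This proves the lemma.

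The same count gives the general principle: for any closed $Z\subseteq\Pj^9$ of codimension $c\ge 2$, the lines meeting $Z$ form a locus of codimension $c-1$ in $G$. Here $c=3$, so we in fact get codimension exactly $2$.

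The only step needing care is Step 1, namely that the singular locus of $D$ is exactly rank $\le 2$ and not something larger. I would verify this with the adjugate-gradient formula, together with the rank-$3$ normal form already used in \Cref{lem:normal-form}. There, $\operatorname{adj}(A)$ has a single nonzero entry, so the gradient of $\det$ is nonzero at rank-$3$ points.
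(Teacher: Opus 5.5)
Your proposal is correct and follows the paper's proof: identify $\Sing(D)$ with the $6$-dimensional rank-$\le 2$ locus, then bound the image of the incidence correspondence of dimension $6+8=14$ in the $16$-dimensional $G$. Your adjugate argument for $\Sing(D)=D_{\le 2}$ and your sharpening to codimension exactly $2$ are correct additions that the paper leaves unstated.
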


\begin{proof}
The singular locus of $D$ is the locus of quadrics of rank at most $2$. For symmetric $4\times 4$ matrices, this locus has dimension $6$. Let us consider the incidence correspondence
\[
I_{\mathrm{sing}}:=\{([Q],\ell)\in \Sing(D)\times G:[Q]\in \ell\}.
\] 
For fixed $[Q]\in \Pj^9$, the family of lines through $[Q]$ is $\Pj^8$, so
\[
\dim I_{\mathrm{sing}} = 6+8 = 14.
\]
Its image in $G$ has dimension at most $14$, while $\dim G=16$. It follows that the locus of lines meeting $\Sing(D)$ has codimension at least $2$, as was sought.
\end{proof}

\begin{lemma}\label{lem:bad-tangent-directions}
The locus of lines tangent to $D$ at a smooth point but not of contact type $2+1+1$ has codimension at least $2$ in $G$.
\end{lemma}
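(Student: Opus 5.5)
We prove the bound by an incidence-correspondence dimension count parallel to Lemma~\ref{lem:lines-meeting-singD}. Set $D_{\mathrm{sm}}:=D\setminus\Sing(D)$, the locus of rank-$3$ quadrics, which has dimension $8$. Consider the incidence variety
\[
I_{\mathrm{tan}}:=\{([Q],\ell)\in D_{\mathrm{sm}}\times G:\ [Q]\in\ell,\ \ell\subseteq T_{[Q]}D\}.
\]
For fixed $[Q]\in D_{\mathrm{sm}}$, the lines through $[Q]$ inside the hyperplane $T_{[Q]}D\cong\Pj^8$ form a $\Pj^7$. Hence $\dim I_{\mathrm{tan}}=15$, and its image in $G$ is the (at most) codimension-one tangency locus. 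It therefore suffices to show that the ``bad'' pairs form a closed subset $I_{\mathrm{bad}}\subseteq I_{\mathrm{tan}}$ of dimension at most $14$. Its image in $G$ then has dimension at most $14$, i.e. codimension at least $2$.

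For a pair $([Q],\ell)\in I_{\mathrm{tan}}$, failure of contact type $2+1+1$ means one of the following:
\begin{enumerate}[label=(\alph*)]
\item the contact order of $\ell$ with $D$ at $[Q]$ is at least $3$;
\item $\ell$ is tangent to $D$ at a second point, or meets $\Sing(D)$ elsewhere;
\item $\ell\subseteq D$.
\end{enumerate}
Case (b) is handled at the level of $G$. The part meeting $\Sing(D)$ is already covered by Lemma~\ref{lem:lines-meeting-singD}. Double tangency is handled by the analogous incidence over pairs of points of $D_{\mathrm{sm}}$: each tangency imposes one independent condition on $\ell$, giving codimension $2$.

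For (a) and (c), we work in the normal coordinates of Lemma~\ref{lem:normal-form}. Take $Q_0=x_0^2+x_1^2+x_2^2$ and let the second generator be $Q_1=Lx_3+q$, with the $x_3^2$-coefficient zero by tangency. Expanding, $\det(Q_0+tQ_1)=-t^2\,\Phi(L)+O(t^3)$, where $\Phi(L)$ is (up to a constant) the $Q_0$-norm $L_0^2+L_1^2+L_2^2$ of the coefficient vector of $L$; this is the $t^2$ coefficient computed in Lemma~\ref{lem:normal-form} when $L=x_0$. Contact order at least $3$ therefore forces $\Phi(L)=0$, a single nontrivial equation on the fiber $\Pj^7$ of $I_{\mathrm{tan}}\to D_{\mathrm{sm}}$. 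Such lines form a divisor in each fiber, so their total dimension is $8+6=14$. The same equation covers $L=0$, i.e. case (c), as well as any line contained in $D$.

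It remains to check that the locus in (a) is genuinely of dimension $14$ and not everything. One must verify that the $t^2$ coefficient is not identically zero on the fiber, and here the explicit example $W_{a,b}$, whose discriminant has exact order $2$ at $t=0$, suffices. Since the family is $\PGL(V)$-homogeneous over $D_{\mathrm{sm}}$ (rank-$3$ quadrics form a single orbit), it is enough to perform the computation at the single point $[Q_0]$; upper semicontinuity then gives the fiber dimension uniformly.

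The main obstacle is correctly computing the $t^2$ coefficient of $\det(A+tB)$ with $A=\operatorname{diag}(1,1,1,0)$ and $B_{33}=0$. This coefficient is the sum of principal $2\times2$ minors pairing the row and column of $B$ indexed by $3$ (the $x_3$-row and $x_3$-column) with $A$. First I would confirm it equals $-\sum_i B_{i3}^2$ up to scaling, and then check that the vanishing locus is a proper hypersurface in the fiber.
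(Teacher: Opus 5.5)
Your route is the paper's: you fiber the tangency incidence over the $8$-dimensional $D_{\mathrm{sm}}$, take the fiber over $[Q_0]$ to be the $\Pj^7$ of tangent directions, show that the bad directions form a proper closed subset of that $\Pj^7$ using $W_{a,b}$, and conclude $\dim\le 8+6=14$. The paper does this in one stroke. It writes the restricted quartic as $t^2(\alpha+\beta t+\gamma t^2)$ and places every bad direction in $\{\beta^2-4\alpha\gamma=0\}$.

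Your case (a) is sharper than the paper's displayed criterion. Your computation is right: only the principal $2\times 2$ minors on $\{i,3\}$ survive, so the $t^2$-coefficient (the paper's $\alpha$) is $-\tfrac14(L_0^2+L_1^2+L_2^2)$. The condition $\alpha=0$ is genuinely needed, because $\alpha=0$, $\beta\neq 0$ gives contact type $3+1$ while $\beta^2-4\alpha\gamma=\beta^2\neq 0$. So the bad locus in the fiber is really $\{\alpha(\beta^2-4\alpha\gamma)=0\}$. Your explicit rank-$3$ quadric $\{L_0^2+L_1^2+L_2^2=0\}$ makes the first factor visibly proper, and, as you note, it contains the lines lying in $D$.

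The weak point is case (b). The claim that ``each tangency imposes one independent condition'' is exactly the codimension-two statement for bitangent lines, and you assert it rather than prove it. You do not need a second incidence; stay in the same fiber $\Pj^7$. A second tangency at a smooth point, or a passage through $\Sing(D)$ (where the local intersection multiplicity is automatically at least $2$), means the residual quadratic $\alpha+\beta t+\gamma t^2$ has a double root, i.e. $\beta^2-4\alpha\gamma=0$. This is a closed condition on $\Pj^7$, and it is proper because $W_{a,b}$ has distinct residual roots $-1/a\neq -1/b$. That disposes of (b) without even invoking Lemma~\ref{lem:lines-meeting-singD}.

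If you prefer to keep your pair incidence, it can be completed. For $Q$ of rank $3$ with vertex $v$, one has $T_{[Q]}D=\{Q' : Q'(v)=0\}$, so $\langle Q_0,Q_2\rangle$ is tangent at both points iff each vertex lies on the other quadric. Then $Q_0(v_2)=0$ cuts one condition on the $7$-fold $D\cap T_{[Q_0]}D$. However, you would have to argue that this $7$-fold is irreducible and that the condition is nontrivial on it, not assume it.
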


\begin{proof}
Fix $[Q_0]\in D_{\mathrm{sm}}$, and consider tangent lines through $[Q_0]$. These are parametrized by $\Pj^7$. Along such a line, choose an affine parameter $t$ with $t=0$ corresponding to $[Q_0]$. Since the line is tangent to $D$, the restriction of the determinant quartic has the form
\[
f(t)= t^2(\alpha+\beta t+\gamma t^2),
\]
where $\alpha,\beta,\gamma$ depend as polynomials on the tangent direction.

The generic contact type is $2+1+1$, which occurs exactly when the residual quadratic
$\alpha+\beta t+\gamma t^2$
has two distinct roots, i.e. when
$\beta^2-4\alpha\gamma\neq 0.$
Thus, the non-generic tangent directions are contained in the hypersurface
$\{\beta^2-4\alpha\gamma=0\} \subseteq \Pj^7$, which is a proper containment because Lemma~\ref{lem:normal-form} exhibits explicit tangent directions of type $2+1+1$. Consequently, for fixed $[Q_0]$, the bad tangent directions have dimension at most $6$. Now, varying $[Q_0]$ over the $8$-dimensional variety $D_{\mathrm{sm}}$, we obtain a locus of dimension at most $8+6=14.$
This demonstrates that its image in $G$ has codimension at least $2$.
\end{proof}

\subsection{The structure of the special fiber}

\begin{proposition}\label{prop:T-irreducible}
The locus $T$ is an irreducible divisor in $G$.
\end{proposition}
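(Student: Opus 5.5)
We realize $T$ as the image of an irreducible incidence variety and then count dimensions. Consider
\[
I_T:=\{([Q_0],\ell)\in D_{\mathrm{sm}}\times G : [Q_0]\in\ell,\ \ell\subseteq \mathbb{T}_{[Q_0]}D\},
\]
where $\mathbb{T}_{[Q_0]}D\cong\Pj^8$ is the embedded tangent hyperplane of $D$ at the smooth point $[Q_0]$. The first projection $I_T\to D_{\mathrm{sm}}$ has fiber over $[Q_0]$ the lines through $[Q_0]$ inside a $\Pj^8$, a $\Pj^7$. So $I_T$ is a $\Pj^7$-bundle over $D_{\mathrm{sm}}$; concretely, it is the projectivization of a rank-$8$ vector bundle, the tangent hyperplane modulo the point.

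The base $D_{\mathrm{sm}}$, the rank-exactly-$3$ quadrics, is irreducible of dimension $8$. It is a single $\PGL(V)$-orbit, or equivalently an open subset of the irreducible hypersurface $D$, since $\det$ is an irreducible polynomial. Hence $I_T$ is irreducible of dimension $8+7=15$, and its closure of image $\overline{\pi_2(I_T)}\subseteq G$ is irreducible. The locus defining $T$ is an open subset of $\pi_2(I_T)$: the condition ``otherwise transverse'' is open, and it is nonempty by Lemma~\ref{lem:normal-form} and Lemma~\ref{lem:bad-tangent-directions}. Therefore $T=\overline{\pi_2(I_T)}$ is irreducible.

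It remains to show $\dim T=15$, i.e.\ that $\pi_2$ is generically finite. A general point of $T$ is $W_{\mathrm{node}}$ up to the group action (Proposition~\ref{prop:unique-generic-boundary-orbit}). For such a line $\ell$, the points $[Q_0]$ with $([Q_0],\ell)\in I_T$ are the points of $\ell\cap D_{\mathrm{sm}}$ where $\ell$ is tangent to $D$. These are the multiple roots of the discriminant of root type $2+1+1$, so there is exactly one such point. Thus $\pi_2$ is generically injective and $\dim T=15=\dim G-1$.

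As a cross-check, $T$ contains the orbit $\mathcal O_{\mathrm{node}}$ as a dense subset. Its dimension is $15$ provided the stabilizer of $W_{\mathrm{node}}$ is finite, which one would verify directly from the normal form. The main point needing care is the nonemptiness and openness of the transverse condition, together with the generic-injectivity count; both follow from the explicit discriminant $-\tfrac14 t^2(1+at)(1+bt)$.
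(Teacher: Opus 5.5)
Your proof is correct and follows the paper's argument. Both use the incidence correspondence $I$, viewed as a $\Pj^7$-bundle over the irreducible $8$-dimensional $D_{\mathrm{sm}}$, and then show the projection to $G$ is generically injective because a general tangent line has root type $2+1+1$ and hence a unique tangency point; your detour through $\mathcal O_{\mathrm{node}}$ is harmless but unnecessary, since this uniqueness already holds for every line in the dense defining locus of $T$.
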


\begin{proof}
Consider the incidence correspondence
\[
I:=\{([Q],\ell)\in D_{\mathrm{sm}}\times G : [Q]\in \ell\subseteq T_{[Q]}D\}.
\]
The smooth locus $D_{\mathrm{sm}}$ is irreducible of dimension $8$, since $D\subseteq \Pj^9$ is an irreducible quartic hypersurface. For fixed $[Q]\in D_{\mathrm{sm}}$, lines through $[Q]$ inside the tangent hyperplane $T_{[Q]}D\cong \Pj^8$ are parametrized by $\Pj^7$. Hence
\[
\dim I = 8+7=15.
\]
Since $I$ is a projective bundle over the irreducible variety $D_{\mathrm{sm}}$, it is itself irreducible.

By Lemmas~\ref{lem:lines-meeting-singD} and \ref{lem:bad-tangent-directions}, a general tangent line avoids $\Sing(D)$ and has contact type $2+1+1$ at a unique smooth point of $D$; in particular, the root type $2+2$, which would correspond to tangency at two smooth points, occurs only in codimension at least $2$. Therefore, the projection $I\to G$ is generically one-to-one onto its image. The image is exactly $T$, hence $T$ is irreducible of dimension $15$ (and so it is a divisor).
\end{proof}

All of this work amounts to the following result.

\begin{theorem}\label{thm:special-fiber-structure}
The repeated-root locus decomposes as
\[
F_\infty = T \cup Z,
\]
where $T$ is an irreducible divisor and $Z$ has codimension at least $2$ in $G$. In particular, $T$ is the unique divisorial component of $F_\infty$, and
\[
T = \overline{\mathcal O_{\mathrm{node}}}.
\]
\end{theorem}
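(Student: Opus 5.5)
The plan is to stratify the repeated-root locus according to how the line $\ell_W$ meets $D$, show that every stratum other than generic simple tangency is small, and then identify the generic stratum with the orbit of $W_{\mathrm{node}}$.

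First, note that a pencil $W$ lies in $R$ exactly when one of the following holds. Either $\ell_W\subseteq D$, so that $\Delta_W\equiv 0$ and every root counts as repeated. Or $\ell_W$ meets $D$ in a non-reduced scheme, which happens at a singular point of $D$ or by tangency at a smooth point.

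We then set
\[
Z:=\overline{\{\ell\subseteq D\}}\cup\overline{\{\ell\cap\Sing(D)\neq\emptyset\}}\cup\overline{\{\ell \text{ tangent to } D \text{ at a smooth point, not of type } 2+1+1\}}.
\]
The second set has codimension at least $2$ by Lemma~\ref{lem:lines-meeting-singD}. The third has codimension at least $2$ by Lemma~\ref{lem:bad-tangent-directions}. For lines contained in $D$, we use an incidence count. Such a line lies in the $15$-dimensional locus $I$ of Proposition~\ref{prop:T-irreducible}, since a line in $D$ is tangent to $D$ at each of its smooth points. But now the fibers of $I\to G$ over these lines are positive-dimensional, being the whole line, so their image has dimension at most $14$. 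Alternatively, one can simply note that the condition $\ell\subseteq D$ imposes the vanishing of all $5$ coefficients of $\Delta_W$, and then check codimension using an explicit point.

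Every remaining pencil in $R$ is tangent to $D$ at a smooth point with contact type $2+1+1$, so it lies in $T$ by definition. Hence $R\subseteq T\cup Z$. Conversely, $T\subseteq F_\infty$, because the defining open set of $T$ consists of pencils with a double root and $F_\infty$ is closed. Therefore $F_\infty=T\cup Z$, with $T$ an irreducible divisor by Proposition~\ref{prop:T-irreducible} and $\operatorname{codim} Z\geq 2$. Since $T$ is irreducible of dimension $15$ and not contained in $Z$, every $15$-dimensional component of $F_\infty$ must equal $T$. This gives uniqueness.

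For the identification $T=\overline{\mathcal O_{\mathrm{node}}}$, Lemma~\ref{lem:normal-form} and Proposition~\ref{prop:unique-generic-boundary-orbit} show that a dense open subset of $T$ lies in $\mathcal O_{\mathrm{node}}$, so $T\subseteq\overline{\mathcal O_{\mathrm{node}}}$. Conversely, $W_{\mathrm{node}}$ itself is a generic tangency pencil: its discriminant $-\tfrac14t^2(1+t)(1+2t)$ has type $2+1+1$, and its rank-$3$ member is a smooth point of $D$. Since $T$ is closed and $\PGL(V)$-stable, it follows that $\overline{\mathcal O_{\mathrm{node}}}\subseteq T$. Dimensions are then consistent, with $\dim\mathcal O_{\mathrm{node}}=15$ forced by equality with $T$.

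The main obstacle is the stratum of lines contained in $D$, which earlier lemmas do not explicitly cover. I would handle it with the incidence argument above, which gives dimension at most $14$. As a backup, I would bound this stratum by lines through $\Sing(D)$ together with lines in $D_{\mathrm{sm}}$, using the fact that $D$ is a quartic and so lines in it form a family of dimension at most $2\cdot 9-3-5=10$ by the standard expected-dimension count.
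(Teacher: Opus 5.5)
Your proposal is correct and takes essentially the paper's route. Lemmas~\ref{lem:lines-meeting-singD} and~\ref{lem:bad-tangent-directions} push every non-generic repeated-root phenomenon into codimension $\geq 2$, and Lemma~\ref{lem:normal-form} with Propositions~\ref{prop:unique-generic-boundary-orbit} and~\ref{prop:T-irreducible} give $T=\overline{\mathcal O_{\mathrm{node}}}$; you also make explicit two steps the paper leaves implicit, namely the incidence argument for lines contained in $D$ and the $\PGL(V)$-invariance argument for $\mathcal O_{\mathrm{node}}\subseteq T$ (only your backup count is off: the expected dimension of lines on a quartic in $\Pj^9$ is $2\cdot 9-3-4=11$, and for a special hypersurface like $D$ it bounds components from below, not above, so rely on the incidence argument instead).
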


\begin{proof}
Away from lines meeting $\Sing(D)$, a repeated root in the discriminant quartic is equivalent to non-transverse intersection of $\ell_W$ with $D$ at a smooth point, i.e. tangency. By Lemmas~\ref{lem:lines-meeting-singD} and \ref{lem:bad-tangent-directions}, all repeated-root phenomena other than generic simple tangency occur in codimension at least $2$. Therefore, the unique divisorial component of $F_\infty$ is the generic tangency locus $T$.

By Proposition~\ref{prop:unique-generic-boundary-orbit}, the general point of $T$ lies in the orbit $\mathcal O_{\mathrm{node}}$, and by Proposition~\ref{prop:T-irreducible} the divisor $T$ is irreducible. Therefore,
\[
T=\overline{\mathcal O_{\mathrm{node}}}.
\]
\end{proof}

\section{The class of the nodal divisor}

We now compute the class of the boundary divisor $T$.

\begin{theorem}\label{thm:nodal-class}
The nodal divisor has class
\[
[T]=12\sigma_1
\]
in $A^1(G)$.
\end{theorem}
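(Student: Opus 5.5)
We compute $[T]$ the same way we computed the general fiber class: write $[T]=m\sigma_1$ and recover $m=[T]\cdot\sigma_{8,7}$ by counting the points of $T$ on the Schubert slice $B$. Recall that $B\cong\Pj^1$ is the pencil of lines through a general point $q$ in a general plane $\Pi\subseteq\Pj^9$.

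The first step is to identify $T\cap B$ set-theoretically. For general $\Pi$, the curve $C=\Pi\cap D$ is a smooth plane quartic. By Lemma~\ref{lem:lines-meeting-singD}, the locus of lines meeting $\Sing(D)$ has codimension at least $2$, so a general plane avoids $\Sing(D)$, which has dimension $6\le 9-3$. Hence no line of $B$ meets $\Sing(D)$. The locus $Z$ of Theorem~\ref{thm:special-fiber-structure} also has codimension at least $2$. A general translate of the one-dimensional cycle $B$ therefore misses $Z$, by Kleiman transversality for the transitive $\PGL_{10}$-action on $G$. It follows that a line $\ell\in B$ lies in $T$ exactly when it is tangent to $D$ at a smooth point, i.e.\ when $\ell$ is tangent to $C$. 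By Lemma~\ref{lem:12-tangents} (or equivalently the Pl\"ucker formula in the Remark), there are exactly $12$ such lines, and each is a simple tangent with contact type $2+1+1$.

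The second step, which I expect to be the main point requiring care, is to check that each of these $12$ intersection points is transverse, i.e.\ has multiplicity one. I would argue this in two ways.
\begin{itemize}
\item[(a)] Kleiman's theorem applies directly: $T$ is a reduced irreducible divisor (Proposition~\ref{prop:T-irreducible}), $B$ is a Schubert cycle, and $G$ is homogeneous under $\PGL_{10}$ (not $\PGL(V)$). So for a general translate of $B$, the intersection $T\cap B$ is reduced and consists of points in the smooth locus of $T$. The count of $12$ is then the intersection number.
\item[(b)] A local check gives the same conclusion. Near a simple tangent $\ell_0$, the discriminant of the restricted binary quartic vanishes to order exactly $1$ in the parameter $u$ on $B$, as established in the proof of Proposition~\ref{prop:degree-on-slice}. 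The discriminant $\Delta=I^3-27J^2$ pulled back to $G$ cuts out $F_\infty$. Near a general point of $T$, it vanishes to order one along $T$, since $T$ is reduced and the local equation of the tangency is a simple zero. So the restriction to $B$ vanishing to order $1$ means the local intersection multiplicity is $1$.
\end{itemize}

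Combining the two steps gives $[T]\cdot\sigma_{8,7}=12$, hence $[T]=12\sigma_1$. As a consistency check, the divisor of the rational function $p$ satisfies $p^*(\infty)=p^*(a)$ in $A^1(G)$. The scheme-theoretic fiber over $\infty$ is the zero divisor of $\Delta$ pulled back to $G$. This pullback is a section of $\mathcal O(12)$ in the Plücker class: $\Delta$ has degree $6$ in the coefficients of the binary quartic, each coefficient is a quadratic function of the Plücker coordinates, and the quartic itself is $\det(sQ_0+tQ_1)$. Since $T$ is its unique divisorial component with multiplicity one, this again gives $12\sigma_1$, matching Theorem~\ref{thm:general-fiber-class}.
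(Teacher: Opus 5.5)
Your proposal is correct and is essentially the paper's proof: you intersect $T$ with the Schubert slice $B$, identify $B\cap T$ with the $12$ simple tangent lines from $q$ to $C$, and obtain transversality from Kleiman or from the first-order vanishing of the discriminant, as the paper does. You are also slightly more careful than the paper, since you use $\operatorname{codim} Z\ge 2$ to guarantee that $B\cap T$ contains nothing else.

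One small slip is in your consistency check. The individual coefficients of $\det(sQ_0+tQ_1)$ are not functions of the Pl\"ucker coordinates: they are quartic in the entries of $Q_0,Q_1$ and are not $\mathrm{SL}_2$-invariant. The discriminant $\Delta$ is invariant, of degree $24$ in those entries and hence of degree $12$ in the Pl\"ucker coordinates, so your $\mathcal O(12)$ conclusion stands.
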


\begin{proof}
Let us intersect $T$ with the same Schubert cycle $B\cong \Pj^1$ used in the proof of Theorem~\ref{thm:general-fiber-class}. Here, $B$ is the family of lines through a general point $q$ in a general plane $\Pi\subseteq \Pj^9$.

On this Schubert slice, the determinant hypersurface restricts to the smooth plane quartic
\[
C=\Pi\cap D.
\]
The intersection $B\cap T$ consists precisely of the lines through $q$ tangent to $C$. By Lemma~\ref{lem:12-tangents}, there are $12$ such lines, and for general $q$, all tangencies are simple; the requisite general-position statements may be justified, for instance, by Kleiman transversality \cite{Kleiman}. Equivalently, the projection $\pi_q\colon C\to \Pj^1$ has $12$ simple ramification points. At such a line, the discriminant of the corresponding binary quartic vanishes to first order, so the local equation of the tangency divisor cuts $B$ with multiplicity one. Thus, $B$ meets $T$ transversely at each of these $12$ points, and their intersection $B\cap T$ is reduced of degree $12$.

Write $[T]=m\sigma_1$. Since $\sigma_1\cdot \sigma_{8,7}=1$, we obtain
\[
m=[T]\cdot \sigma_{8,7}=12;
\]
therefore, $[T]=12\sigma_1$
\end{proof}

\section{Scheme-theoretic versus set-theoretic fibers}

Choudhary's codimension-one calculation in \cite{Choudhary} includes a separate discussion of the difference between scheme-theoretic fibers and reduced orbit closures. The same issue occurs here at only the CM values $j=0$ and $j=1728$. Away from those two points, the finite smooth fibers
are reduced; the special divisor $T$ is also generically reduced and appears with multiplicity one
in the fiber over $\infty$.

\begin{proposition}\label{prop:finite-stabilizer-smooth}
The stabilizer in $\PGL(V)$ of any smooth pencil is finite. In particular, every
$\PGL(V)$-orbit in $U_{\mathrm{sm}}$ has dimension $15$.
\end{proposition}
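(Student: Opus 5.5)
By Proposition~\ref{prop:diag} it suffices to treat a diagonal pencil
\[
W_\lambda=\left\langle \sum_i x_i^2,\ \sum_i \lambda_i x_i^2\right\rangle
\]
with the $\lambda_i$ pairwise distinct. Conjugating by a fixed element of $\PGL(V)$ changes the stabilizer only by conjugation, so finiteness for $W_\lambda$ gives finiteness in general. The plan is to map the stabilizer to a group of projective automorphisms of the discriminant divisor and show that this map has finite image and finite kernel.

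\textbf{Step 1.} Let $g\in\GL(V)$ represent an element of $\mathrm{Stab}(W_\lambda)$. Then $g$ acts on the $2$-plane $W_\lambda$ by an invertible linear map, giving an element $\rho(g)\in\PGL(W_\lambda)\cong\PGL_2$. Scalars in $\GL(V)$ act on $W_\lambda$ by scalars, so $\rho$ is well defined on $\PGL(V)$. By the determinant identity used in Proposition~\ref{prop:unique-generic-boundary-orbit},
\[
\det(g^{\top} Q g)=(\det g)^2\det Q,
\]
so $\rho(g)$ preserves the discriminant divisor $\Delta_{W_\lambda}$ on $\Pj(W_\lambda)$. This divisor consists of four distinct points. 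An element of $\PGL_2$ fixing three distinct points is the identity, so an element preserving four distinct points as a set is determined by the permutation it induces. Hence the image of $\rho$ embeds in $S_4$ and is finite.

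\textbf{Step 2.} Let $g$ lie in the kernel of $\rho$. Then $g$ acts on $W_\lambda$ by a scalar $c$, so $g^{\top}B_0 g=cB_0$ and $g^{\top}B_1 g=cB_1$, where $B_0=I$ and $B_1=\mathrm{diag}(\lambda_i)$. With $T=B_0^{-1}B_1$, these two relations give $g^{-1}Tg=T$. Since $T$ has four distinct eigenvalues, $g$ commutes with $T$ and is therefore diagonal, say $g=\mathrm{diag}(\mu_i)$. The relation $g^{\top}g=cI$ then forces $\mu_i^2=c$ for every $i$. Modulo scalars this leaves only $\mu_i=\pm 1$ up to a common factor, so $\ker\rho\cong(\mathbb{Z}/2)^4/\{\pm 1\}\cong(\mathbb{Z}/2)^3$, which is finite.

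\textbf{Step 3.} Combining the two steps, $\mathrm{Stab}(W_\lambda)$ is an extension of a subgroup of $S_4$ by $(\mathbb{Z}/2)^3$, hence finite. The orbit therefore has dimension $\dim\PGL(V)-0=15$.

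The only delicate point is Step 2. One must check that preserving $W_\lambda$ with trivial induced action really means that $g$ scales both $B_0$ and $B_1$ by the \emph{same} constant, which is exactly the meaning of a scalar action on $W_\lambda$. The commutation with $T$ then follows immediately.
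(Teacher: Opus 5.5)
Your proof is correct and follows the paper's route: send the stabilizer to the permutations of the four singular members, so its image is finite in $S_4$, then show the kernel is a finite group of diagonal matrices. Your kernel step is tighter than the paper's: you get diagonality from $g^{-1}Tg=T$ rather than from the vertices $e_i$, and you use that $g$ acts on $W_\lambda$ by a scalar to get $\mu_i^2=c$ for all $i$. The paper's condition $d_i^2=\alpha+\beta\lambda_i$ by itself still allows a one-parameter family in $\PGL(V)$ until one also uses that $Q_1$ is sent into the pencil, which forces $\beta=0$.
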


\begin{proof}
After Proposition~\ref{prop:diag}, a smooth pencil can be represented by
\[
W_\lambda=
\Big\langle
x_0^2+x_1^2+x_2^2+x_3^2,\
\lambda_0x_0^2+\lambda_1x_1^2+\lambda_2x_2^2+\lambda_3x_3^2
\Big\rangle
\]
with $\lambda_0,\lambda_1,\lambda_2,\lambda_3$ pairwise distinct.
Any element of $\PGL(V)$ stabilizing $W_\lambda$ preserves the finite set of singular
quadrics in the pencil, hence permutes the four distinguished rank-$3$ members corresponding
to the four roots of $\Delta_{W_\lambda}$. The kernel of this permutation action preserves
each of their vertices, so it preserves each coordinate line $\langle e_i\rangle$ and is
therefore represented by some diagonal matrix.

Now, for such a diagonal matrix to stabilize the pencil, there must exist $\alpha,\beta$
such that
\[
d_i^2=\alpha+\beta\lambda_i \qquad (i=0,1,2,3).
\]
Since the $\lambda_i$ are distinct, the four values $d_i^2$ are determined by the two
parameters $\alpha$ and $\beta$, and each $d_i$ is thereby determined up to sign. Thus, the diagonal kernel is finite. Since the image lies as a subgroup of the finite permutation group $S_4$ on four letters, the full stabilizer is finite.
\end{proof}

\begin{proposition}\label{prop:CM-ramification}
Let $\widetilde U_{\mathrm{sm}}\to U_{\mathrm{sm}}$ be the finite \'etale cover obtained by
ordering the four roots of the discriminant quartic. After using the $\PGL_2$-action on the
parameter line to send three ordered roots to $0,1,\infty$, the fourth root gives a coordinate
\[
\lambda \in \Pj^1\setminus\{0,1,\infty\},
\]
and the pullback of the map $p$ is the Legendre $j$-function
\[
j(\lambda)=2^8\frac{(1-\lambda+\lambda^2)^3}{\lambda^2(1-\lambda)^2}.
\]

The only finite branch values of this map are $0$ and $1728$. The ramification index is
$3$ over $0$ and $2$ over $1728$. Consequently, if
\[
O_a := \bigl(p^{-1}(a)\cap U_{\mathrm{sm}}\bigr)_{\mathrm{red}},
\]
then
\[
p^{-1}(a)\cap U_{\mathrm{sm}} = O_a
\qquad\text{for } a\notin\{0,1728,\infty\},
\]
while scheme-theoretically
\[
p^{-1}(1728)\cap U_{\mathrm{sm}} = 2\,O_{1728},
\qquad
p^{-1}(0)\cap U_{\mathrm{sm}} = 3\,O_0.
\]
Upon taking closures in $G$, one gets
\[
F_a=\overline{O_a}
\qquad\text{for } a\notin\{0,1728,\infty\},
\]
and
\[
F_{1728}=2\,\overline{O_{1728}},
\qquad
F_0=3\,\overline{O_0}
\]
as divisors on $G$.
\end{proposition}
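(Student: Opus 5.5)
The plan is to reduce the statement to the local structure of the Legendre $j$-function, which requires checking two things: that $p$ factors through $\lambda$, and that the resulting scheme-theoretic fibers of $p$ on $U_{\mathrm{sm}}$ are the pullbacks of the scheme-theoretic fibers of $j$.

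\textbf{Step 1: the cover and the factorization.} Over $U_{\mathrm{sm}}$ the discriminant divisor is reduced of degree $4$, so the cover $\widetilde U_{\mathrm{sm}}$ of orderings of its roots is finite étale of degree $24$. On $\widetilde U_{\mathrm{sm}}$, the three ordered roots $r_1,r_2,r_3$ determine a unique element of $\PGL_2$ sending them to $0,1,\infty$. The image of $r_4$ is the cross-ratio $\lambda$, a regular function to $\Pj^1\setminus\{0,1,\infty\}$. By the computation recorded after the definition of $p$, the $j$-invariant of $st(s-t)(s-\lambda t)$ is $j(\lambda)$, and $j$ of a binary quartic is $\PGL_2$-invariant. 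Hence $p\circ(\text{cover})=j\circ\lambda$.

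\textbf{Step 2: smoothness of $\lambda$.} I claim $\lambda\colon \widetilde U_{\mathrm{sm}}\to \Pj^1\setminus\{0,1,\infty\}$ is smooth. Locally, the ordered-root map from $\widetilde U_{\mathrm{sm}}$ to the space of ordered quadruples of distinct points is a submersion, because the roots vary étale-locally with the coefficients of $\Delta_W$. The map $W\mapsto \Delta_W$ to the space of squarefree binary quartics is also a submersion. One way to see this is to note that it is $\GL(V)$-equivariant, with the diagonal pencils surjecting. More concretely, in the diagonal normal form of Proposition~\ref{prop:diag}, varying the $\lambda_i$ and the basis of the pencil moves the roots freely. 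Finally, cross-ratio is a smooth map on quadruples of distinct points.

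\textbf{Step 3: ramification of $j$.} I compute the ramification of $j$ directly. Write $j-1728=2^6(\lambda+1)^2(\lambda-2)^2(2\lambda-1)^2/(\lambda^2(1-\lambda)^2)$. Then $j$ has triple zeros at the roots of $\lambda^2-\lambda+1$, and $j-1728$ has double zeros at $-1,2,\tfrac12$. The map $j$ has degree $6$ and total ramification $2\cdot 6-2=10$. The points over $0$ contribute $2\cdot 2=4$, the points over $1728$ contribute $3$, and the points over $\infty$ contribute $3$ (double poles at $0,1,\infty$). That accounts for all $10$, so there is no other finite branching. Thus the scheme-theoretic fiber $j^*(a)$ is reduced for $a\ne 0,1728$, equals $3\cdot(\text{reduced})$ over $0$, and equals $2\cdot(\text{reduced})$ over $1728$.

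\textbf{Step 4: descent to $U_{\mathrm{sm}}$ and closures.} Pulling back along the smooth map $\lambda$ preserves these multiplicities: a reduced divisor pulls back to a reduced one, and the local equation $u^e$ pulls back to $u^e$ with $u$ part of a coordinate system. Pushing down along the étale cover preserves reducedness and multiplicity as well, since both are étale-local. Combined with Corollary~\ref{cor:fibers-are-orbits}, this gives $p^{-1}(a)\cap U_{\mathrm{sm}}=e_a O_a$ with $e_a\in\{1,2,3\}$ as claimed.

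For the statement in $G$, I define $F_a$ as the closure of the divisor $p^*(a)$ on $U_{\mathrm{sm}}$; equivalently, it is the divisor of $p^*(a)$ on the locus where $p$ is defined. By Theorem~\ref{thm:special-fiber-structure}, the only divisor in $G\setminus U_{\mathrm{sm}}$ is $T$, which lies over $\infty$. So for finite $a$, no component of $F_a$ is supported off $U_{\mathrm{sm}}$, and a divisor on $G$ is determined by its restriction to an open set whose complement has no components of the divisor. Hence $F_a=e_a\overline{O_a}$.

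The main obstacle I expect is Step 2, the surjectivity of the differential of $W\mapsto\Delta_W$. If it resists a direct argument, I would fall back on a dimension count. Any ramification of $\lambda$ along a divisor would make it non-reduced generically along an orbit closure. Since $\PGL(V)$ is connected, it acts transitively on $O_a$, and it preserves the fibers of $\lambda$ up to the finite monodromy. So the multiplicity is constant along $O_a$, and it can be computed on the one-parameter diagonal family $\lambda_3=\lambda$ with $(\lambda_0,\lambda_1,\lambda_2)=(0,1,\infty)$. On that family the map is the identity in $\lambda$, so the multiplicity is that of $j$ itself.
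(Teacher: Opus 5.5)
Your proposal is correct and follows essentially the paper's route. You pass to the \'etale cover of ordered roots, identify $p$ with the Legendre $j$-function of the cross-ratio $\lambda$, read off the multiplicities $3$ over $0$ and $2$ over $1728$, and invoke Theorem~\ref{thm:special-fiber-structure} to see that taking closures in $G$ adds no component over finite $a$. Your differences are minor but welcome: you check explicitly that $\lambda\colon \widetilde U_{\mathrm{sm}}\to\Pj^1\setminus\{0,1,\infty\}$ is smooth, which the paper uses tacitly when it says multiplicities can be checked ``on the $\lambda$-line'', and you exclude other branching via the factorization of $j-1728$ and Riemann--Hurwitz instead of computing $j'$.
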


\begin{proof}
Because the discriminant quartic is squarefree on $U_{\mathrm{sm}}$, the cover
$\widetilde U_{\mathrm{sm}}\to U_{\mathrm{sm}}$ obtained by ordering its four roots is finite
\'etale. The map $p$ depends only on the corresponding unordered divisor on $\Pj^1$,
so after passing to $\widetilde U_{\mathrm{sm}}$ and fixing three ordered roots at
$0,1,\infty$, the remaining transverse modulus is the Legendre parameter $\lambda$, and
$p$ is given by the displayed formula (see Section~2).

Since the cover is \'etale, the scheme-theoretic multiplicity of a fiber of $p$ may be checked
after pullback to $\widetilde U_{\mathrm{sm}}$, hence on the $\lambda$-line. Differentiating yields
\[
j'(\lambda)=
2^8\,
\frac{(\lambda-2)(\lambda+1)(2\lambda-1)(\lambda^2-\lambda+1)^2}
{\lambda^3(\lambda-1)^3}.
\]
It follows that the critical points in $\Pj^1\setminus\{0,1,\infty\}$ are
\[
\lambda=-1,\;\frac12,\;2,
\qquad\text{and}\qquad
\lambda^2-\lambda+1=0.
\]

At $\lambda=-1,\frac12,2$, we may check that $j(\lambda)=1728$, at which the zero $j'$ is simple. Thus, the ramification is simple there, so the ramification index over $1728$ is $2$.

At the two roots of $\lambda^2-\lambda+1=0$, the factor $1-\lambda+\lambda^2$ vanishes simply, while the denominator is nonzero, so $j(\lambda)$ vanishes to order $3$. Thus, the ramification index over $0$ is $3$.

There are no other finite critical values, so the finite smooth fibers are reduced away from
$0$ and $1728$, and have multiplicities $3$ and $2$ at those two values. Finally,
Theorem~\ref{thm:special-fiber-structure} shows that the only divisorial component of $G\setminus U_{\mathrm{sm}}$
lies over $\infty$, so taking closures in $G$ introduces no additional finite divisorial
components. This gives the asserted formulas for $F_a$.
\end{proof}

\begin{corollary}\label{cor:reduced-orbit-closure-classes}
The reduced codimension-one orbit closures in $G$ are
\[
\overline{O_a}\quad (a\notin\{0,1728,\infty\}),\qquad
\overline{O_{1728}},\qquad
\overline{O_0},\qquad
T.
\]
Their classes are
\[
[\overline{O_a}] = 12\sigma_1
\quad (a\notin\{0,1728,\infty\}),
\]
\[
[\overline{O_{1728}}] = 6\sigma_1,
\qquad
[\overline{O_0}] = 4\sigma_1,
\qquad
[T] = 12\sigma_1.
\]
That is, every divisorial fiber of $p$ has class $12\sigma_1$,
and the fibers over $1728$ and $0$ are nonreduced with multiplicities $2$ and $3$.
\end{corollary}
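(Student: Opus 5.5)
The corollary follows by assembling results already proved; no new geometry is needed. First I will identify the complete list of reduced codimension-one orbit closures. Let $E\subset G$ be an irreducible $\PGL(V)$-invariant divisor that is the closure of an orbit. There are two cases. If $E$ meets $U_{\mathrm{sm}}$, its general point lies in some smooth orbit. By \Cref{prop:finite-stabilizer-smooth}, that orbit has dimension $15$, so $E$ is the closure of a fiber $p^{-1}(a)\cap U_{\mathrm{sm}}$. By \Cref{cor:fibers-are-orbits}, $E=\overline{O_a}$ for a finite value $a$. If $E$ does not meet $U_{\mathrm{sm}}$, then $E\subseteq G\setminus U_{\mathrm{sm}}=F_\infty$, which is the repeated-root locus. \Cref{thm:special-fiber-structure} then forces $E=T=\overline{\mathcal O_{\mathrm{node}}}$. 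This gives the listed orbit closures, split according to whether $a\in\{0,1728\}$.

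Next come the classes. For a smooth value $a$, \Cref{thm:general-fiber-class} gives $[F_a]=12\sigma_1$. The set of values $a$ whose fibers are divisors is a connected family: the $F_a$ are the pullbacks of the points $a\in\Pj^1$ under $p$. Concretely, $F_a$ is the divisor of zeros of $p^*(j-a)$, with the indeterminacy locus removed; that locus has codimension $\ge 2$, so it does not affect $A^1$. Hence $[F_a]$ is independent of $a$, including $a=0,1728$. To justify this I will argue that $p^*\mathcal O_{\Pj^1}(1)$ extends as a line bundle from the complement of the codimension-$\ge 2$ base locus, and that $\operatorname{Pic}(G)=\mathbb Z\sigma_1$. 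Alternatively, I can rerun the degree computation of \Cref{prop:degree-on-slice} for an arbitrary value $a$. A general slice $B$ avoids the base locus, so $j|_B$ still has degree $12$, and $[F_a]\cdot\sigma_{8,7}=12$ holds for every $a$.

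With $[F_a]=12\sigma_1$ for every $a$, \Cref{prop:CM-ramification} gives $F_{1728}=2\,\overline{O_{1728}}$ and $F_0=3\,\overline{O_0}$ as divisors. Since $A^1(G)\cong\mathbb Z$ is torsion-free, dividing yields $[\overline{O_{1728}}]=6\sigma_1$ and $[\overline{O_0}]=4\sigma_1$. For $a\notin\{0,1728,\infty\}$, the same proposition gives $F_a=\overline{O_a}$, so $[\overline{O_a}]=12\sigma_1$. Finally, \Cref{thm:nodal-class} gives $[T]=12\sigma_1$. Combined with \Cref{thm:special-fiber-structure} and the multiplicity-one statement for $T$ in $F_\infty$, this shows $F_\infty$ also has class $12\sigma_1$, which confirms that every divisorial fiber has class $12\sigma_1$.

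The main obstacle is the constancy step: showing that $[F_0]$ and $[F_{1728}]$ are $12\sigma_1$ and not merely that general fibers are. I would handle it through the slice argument. For a general plane $\Pi$ and point $q$, the slice $B$ is disjoint from the base locus of $p$, which has codimension $\ge 2$, and $B$ meets $F_a$ only in $U_{\mathrm{sm}}$. By Kleiman transversality it can be taken to meet the reduced component properly. The intersection number $F_a\cdot B$, counted with scheme multiplicities, then equals $\deg(j|_B)=12$. This is consistent with each point of $B\cap\overline{O_0}$ appearing as a triple zero of $j|_B$, so that the slice sees only $12/3=4$ such points.
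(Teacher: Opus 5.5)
Your proposal is correct and follows essentially the same route as the paper: cite $[F_a]=12\sigma_1$, take $F_{1728}=2\,\overline{O_{1728}}$ and $F_0=3\,\overline{O_0}$ from Proposition~\ref{prop:CM-ramification}, divide in the torsion-free group $A^1(G)\cong\mathbb Z\sigma_1$, and take $[T]=12\sigma_1$ from Theorem~\ref{thm:nodal-class}. Your explicit argument that $[F_0]$ and $[F_{1728}]$ also equal $12\sigma_1$ is a worthwhile addition. It works either through linear equivalence of all fibers off the codimension-$\geq 2$ indeterminacy locus, or through the slice degree counted with multiplicities. The paper's proof of Theorem~\ref{thm:general-fiber-class} only counts preimages of a general value, and the paper uses the analogous linear-equivalence idea only afterwards, for $F_\infty$.
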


\begin{proof}
For every finite smooth value $a$, Theorem~\ref{thm:general-fiber-class} gives
\[
[F_a]=12\sigma_1.
\]
For $a\notin\{0,1728,\infty\}$, Proposition~\ref{prop:CM-ramification} gives
$F_a=\overline{O_a}$, so $[\overline{O_a}]=12\sigma_1$.

For the CM values, Proposition~\ref{prop:CM-ramification} gives
\[
F_{1728}=2\,\overline{O_{1728}},
\qquad
F_0=3\,\overline{O_0},
\]
hence
\[
[\overline{O_{1728}}]=\frac{1}{2}[F_{1728}]=6\sigma_1,
\qquad
[\overline{O_0}]=\frac{1}{3}[F_0]=4\sigma_1.
\]

Finally, Theorem~\ref{thm:nodal-class} gives $[T]=12\sigma_1$, and the computation in
the proof of Theorem~\ref{thm:nodal-class} already showed that $T$ appears with multiplicity
one in the special fiber.
\end{proof}

The nodal divisor appears with multiplicity one in $F_\infty$. To see this, we may resolve the rational map $p\colon G\dashrightarrow \Pj^1$ to a morphism
$\widetilde p\colon \widetilde G\to \Pj^1$.
All fibers of $\widetilde p$ are linearly equivalent on $\widetilde G$, and the exceptional locus of the birational map $\widetilde G\to G$ lies over the indeterminacy locus of $p$, which is contained in $G\setminus U_{\mathrm{sm}}$ and contributes only codimension at least $2$ upon pushforward. Therefore, the divisor classes of the pushed-forward fibers agree:
$[F_\infty]=[F_a]=12\sigma_1$.
Writing the special fiber as
$[F_\infty]=m[T]+(\text{higher-codimension terms})$,
Theorem~\ref{thm:nodal-class} then hands us
$12\sigma_1=[F_\infty]=m[T]=12m\sigma_1$,
so $m=1$.

\section{Proof of the main theorem}

We now assemble all the pieces of the codimension-one puzzle.

\begin{proof}[Proof of Theorem~\ref{thm:main}]
The existence of the rational $j$-map $p$ and the description of its smooth fibers follow from Proposition~\ref{prop:orbit-classification} and Corollary~\ref{cor:fibers-are-orbits}. The formula for the Chow class
\[
[F_a]=12\sigma_1
\]
for scheme-theoretic fibers is Theorem~\ref{thm:general-fiber-class}. The structure of the special fiber is given by Theorem~\ref{thm:special-fiber-structure}, and the explicit nodal model is Proposition~\ref{prop:nodal-model}. Theorem~\ref{thm:nodal-class} shows that the nodal divisor has class $12\sigma_1$. The distinction between scheme-theoretic fibers and reduced orbit closures at the CM values, and the resulting classes of the reduced orbit closures, are given by Proposition~\ref{prop:CM-ramification} and Corollary~\ref{cor:reduced-orbit-closure-classes} respectively. 

Thus, the codimension-one reduced orbit closures are the closures $\overline{O_a}$ of the smooth
fibers, together with the nodal boundary divisor $T$. For $a\notin\{0,1728,\infty\}$ we obtain
$[\overline{O_a}]=12\sigma_1$, while
\[
[\overline{O_{1728}}]=6\sigma_1,\qquad
[\overline{O_0}]=4\sigma_1,\qquad
[T]=12\sigma_1.
\]
On the level of scheme-theoretic fibers, every divisorial fiber of $p$ has class $12\sigma_1$;
the only nonreduced finite fibers are
\[
F_{1728}=2\,\overline{O_{1728}},\qquad F_0=3\,\overline{O_0}.
\]
\end{proof}

\section{Further directions}

Several natural problems and generalizations emerge from this calculation.
\begin{enumerate}[label=\textup{(\roman*)}]
    \item What we have described is the first clear contour of this case; beyond it, the repeated-root locus breaks into a more intricate archipelago. Determine the higher-codimension orbit closures inside the repeated-root locus, including the root types $2+2$, $3+1$, and $4$, as well as the loci where the tangent point lies in $\Sing(D)$.
    \item Interpret the nodal divisor and its degenerations directly in terms of the projective geometry of complete intersections of two quadrics in $\Pj^3$, and compare this with Choudhary's two-component special fiber for nets of conics. In particular, it would be interesting to isolate precisely why tangency along $\Sing(D)$ is codimension at least $2$ here, whereas the analogous phenomenon in Choudhary's setting remains divisorial.
    \item Describe the stabilizers along the smooth locus, especially at the CM values $j=0$ and $j=1728$, and relate them to the orbifold structure of the coarse moduli line.
\end{enumerate}

\end{document}